\numberwithin{equation}{section}
\newtheorem{thm}{Theorem}[section]
\newtheorem*{thm-nn}{Theorem}
\newtheorem{lem}[thm]{Lemma}
\newtheorem{pro}[thm]{Proposition}
\newtheorem*{pro-nn}{Proposition}
\newtheorem{cor}[thm]{Corollary}
\newtheorem*{cor-nn}{Corollary}
\newtheorem*{conj-nn}{Conjecture}
\theoremstyle{definition}
\newtheorem{defn}[thm]{Definition}
\newtheorem{eg}[thm]{Example}
\theoremstyle{remark}
\newtheorem{rem}[thm]{Remark}
\newtheorem*{ack}{Acknowledgement}
\newcommand{\As}{{\operatorname{As}}}
\newcommand{\Conj}{{\operatorname{Conj}}}
\newcommand{\bare}{s}
\newcommand{\R}{\mathbb{R}}
\newcommand{\Z}{\mathbb{Z}}
\newcommand{\Image}{{\operatorname{Im}}}
\newcommand{\Hom}{\operatorname{Hom}}
\newcommand{\Inv}{\operatorname{Inv}}
\newcommand{\Ab}{\mathrm{Ab}}
\newcommand{\Gp}{\mathbf{Grp}}
\newcommand{\Qdl}{\mathbf{Qdl}}
\newcommand{\SQ}{\mathbf{SQ}}
\begin{document}
\title{Associated groups of symmetric quandles}
\author{Toshiyuki Akita}
\address{Department of Mathematics, Faculty of Science, Hokkaido University,
Sapporo, 060-0810 Japan}
\email{akita@math.sci.hokudai.ac.jp}
\author{Kakeru Shikata}
\address{Department of Mathematics, Graduate School of Science, Hokkaido University,
Sapporo, 060-0810 Japan}
\email{shikata.kakeru.s5@elms.hokudai.ac.jp}
\keywords{quandle, symmetric quandle, presentation of groups,
central extension}
\subjclass[2020]{Primary~20F05, 20N02; Secondary~08A05,19C09, 57K12}


\begin{abstract}
A symmetric quandle is a quandle equipped with a good involution.
In this paper, we investigate the structure of the associated groups of symmetric quandles.
Among other results, we examine the relationship between the associated group of a symmetric quandle and that of its underlying quandle.
As a consequence, we obtain three distinct characterizations of the associated group of the underlying quandle in terms of that of the symmetric quandle.
We also provide a group-theoretic characterization of the associated group of a symmetric quandle and compute its abelianization, showing that it is isomorphic to the first symmetric quandle homology.
Furthermore, we express the second quandle homology of a quandle in terms of the associated group of the corresponding symmetric quandle.
Finally, we show that a symmetric quandle is embeddable in its associated group if and only if its underlying quandle is.
\end{abstract}

\maketitle


\section{Introduction}\label{sec:Intro}
\subsection*{Background}
A \emph{quandle} is a set $Q$ equipped with a binary operation $(x,y) \mapsto x \ast y$ 
satisfying the following axioms:
\begin{enumerate}
\item $x\ast x=x\quad (x\in Q)$,
\item For any $x\in Q$, the map $S_x\colon Q\to Q$ defined by $y\mapsto y\ast x$ is bijective,
\item $(x\ast y)\ast z=(x\ast z)\ast (y\ast z)\quad (x,y,z\in Q).$
\end{enumerate}
If $Q$ satisfies axioms (2) and (3),  but not necessarily (1), then $Q$ is called a \emph{rack}.
A fundamental example of a quandle is the \emph{conjugation quandle}
$\Conj(G)$ of a group $G$, whose underlying set is $G$,
and whose quandle operation is given by the conjugation $g\ast h\coloneqq h^{-1}gh$.

Quandles were independently defined 
by Joyce \cite{MR0638121} and Matveev \cite{MR0672410}, 
both motivated by knot theory. 
Among other results, they showed that the ``knot quandle'' serves as a complete invariant of knots. 
Since then, quandles have found numerous applications across various areas of mathematics, 
including low-dimensional topology \cites{MR3524589,MR3821082}, 
set-theoretic solutions to the Yang--Baxter equation \cites{MR3974961,arXiv:2506.23175}, 
Hopf algebras \cite{MR1994219}, 
symmetric spaces \cite{MR3127819}, 
and more.
In particular, coloring invariants and cocycle invariants, the latter of which was first introduced in \cite{MR1990571}, constructed via quandles have achieved remarkable success in the study of knots and surface-knots in low-dimensional topology.

The \emph{associated group} of a quandle $Q$  (also known as the \emph{adjoint group}, \emph{enveloping group}, 
or \emph{structure group} in the literature),
denoted by $\As(Q)$, is the group defined by the presentation
\[
\As(Q)\coloneqq\langle e_x\ (x\in Q)\mid e_y^{-1}e_x e_y=e_{x\ast y}
\ (x,y\in Q)\rangle.
\]
There are several reasons why the associated groups are relevant.
Among them, one is that the functor $Q\mapsto\As(Q)$, from the category of quandles to
the category of groups, is left adjoint to the functor $G\mapsto\Conj(G)$,
 which assigns to each group its associated conjugation quandle.
Second, although quandle homology groups, introduced in \cite{MR1990571}, are generally difficult to compute, Eisermann's formula \cite{MR3205568} provides a method for computing the second homology group when the associated group is sufficiently well understood (see \S\ref{sec:2nd-qdl-homology}).
Third, it was proved in \cite{MR4669143} that
a group $G$ is isomorphic to the associated group of some quandle if and only if
$G$ admits a presentation $\langle X\mid R\rangle$ in which every
defining relation is of the form
\begin{equation}\label{eq:Wirtinger-ours}
w^{-1}xw=y\quad (x,y\in X, w\in F(X)),
\end{equation}
where $F(X)$ denotes the free group on a set $X$.
We refer to such a presentation as a \emph{Wirtinger presentation} in this paper, 
as it generalizes the classical Wirtinger presentations of knot groups.
Groups admitting a Wirtinger presentation include free groups and free abelian groups, 
knot groups and link groups, high-dimensional knot and link groups,
braid groups and pure braid groups, Artin groups, 
Thompson's group $F$, free central extensions of arbitrary 
groups \cite{MR1356352}---including free nilpotent groups---among others.
All of them arise as associated groups of some quandles.

On the other hand, associated groups are often difficult to compute in practice, since they are defined via group presentations. The difficulty becomes more severe when the quandle is infinite, 
as the defining presentation then necessarily involves infinitely many generators and relations.
A successful approach to studying associated groups is to add power relations of the form 
$(e_x)^{n(x)}=1$, where 
$n(x)$ is a carefully chosen nonnegative integer for each $x\in Q$,
to their presentations, thereby constructing more manageable quotient groups \cites{MR4447657,hasegawa-thesis,MR3974961,MR3671570,MR2803792,MR3276225,arXiv:2407.02955}.
One of the main goals of this paper is to demonstrate that the associated group of a symmetric quandle itself serves as a manageable quotient group—comparable to those obtained by adding power relations.

A symmetric quandle is a quandle equipped with a good involution.
An involution $\rho\colon Q \to Q$ on a quandle $Q$ 
is said to be a \emph{good involution} if it satisfies:
\begin{enumerate}
\item $\rho(x\ast y)=\rho(x)\ast y\quad (x,y\in Q),$
\item $x\ast \rho(y)=S_y^{-1}(x)\quad  (x,y\in Q).$
\end{enumerate}
A \emph{symmetric quandle} $(Q,\rho)$ is a pair of a quandle $Q$
and a good involution $\rho\colon Q\to Q$.
A symmetric rack is defined similarly.
A fundamental example of a symmetric quandle is the
\emph{conjugation symmetric quandle} $(\Conj(G),\mathrm{Inv}(G))$
of a group $G$, where $\mathrm{Inv}(G)$ is the involution defined by $g\mapsto g^{-1}$.

Symmetric quandles, together with their associated groups and symmetric quandle (co)homology,  
were introduced by Kamada \cite{MR2371714} (see also Kamada--Oshiro \cite{MR2657689}) 
in order to define invariants of links and surface-links 
that are not necessarily oriented or orientable.
Since then, symmetric quandles have been extensively studied and applied in low-dimensional topology
\cites{MR2593699,MR3363811,MR2945646,MR2821435,MR2629767,
MR2890467,MR2796229,MR4312970,MR4766150,MR3402903,arXiv:2010.09983,MR4594919}, 
and have also been investigated from  non-topological perspective
 \cites{MR4881590,MR4441473,MR3065997,MR4688855,arXiv:2412.20081,arXiv:2505.08090,
 MR3079760,arXiv:2407.02971,MR4899477}.
 
 The main objectives of this paper are twofold:
 (1) We demonstrate that the associated groups $\As(Q,\rho)$ of symmetric quandles $(Q,\rho)$,
 which are defined by the group presentation
 \[
\As(Q,\rho)\coloneqq\langle \bare_x\ (x\in Q)\mid \bare_y^{-1}\bare_x \bare_y
=\bare_{x\ast y}, \ \bare_{\rho(x)}=\bare_x^{-1}\ (x,y\in Q)\rangle,
\]
 are as significant and mathematically rich as those of ordinary quandles.
 (2) Given a symmetric quandle $(Q,\rho)$, we show that the structure of the associated group $\As(Q)$ of the
 underlying quandle $Q$ can, in several ways, be characterized 
 via the associated group $\As(Q,\rho)$ of the  symmetric quandle.

\subsection*{The associated groups of underlying quandles}
Let us introduce our results concerning the relationship
between the associated groups of symmetric quandles
and those of the underlying quandles.
Let $(Q,\rho)$ be a symmetric quandle.
Let $\pi_Q\colon \As(Q)\to\As(Q,\rho)$ be the canonical surjection
defined by $e_x\mapsto s_x$ $(x\in Q)$.
Let $Z(Q,\rho)$ denote its kernel.
First, we prove that $Z(Q,\rho)$ lies in the center of $\As(Q)$ and is a free abelian group
 with an explicitly described basis (Theorem \ref{thm:main}).
Consequently, they fit into a central extension with a free abelian kernel:
\begin{equation}\label{eq:intro-central-ext}
1\to Z(Q,\rho)\to\As(Q)\xrightarrow{\pi_Q}\As(Q,\rho)\to 1.
\end{equation}
Thus, the study of the structure of $\As(Q)$ can, in principle, be reduced to the
study of $\As(Q,\rho)$ and a $2$-cocycle representing the class in $H^2(\As(Q,\rho);Z(Q,\rho))$
corresponding to the central extension \eqref{eq:intro-central-ext}.
Second, the canonical surjection $\pi_Q$ induces an isomorphism of commutator
subgroups (Corollary \ref{cor:commutator}):
\[
\pi_Q\colon [\As(Q),\As(Q)]\xrightarrow{\cong} [\As(Q,\rho),\As(Q,\rho)].
\]
Hence, there exists a group extension of the form
\begin{equation}\label{eq:intro-comm-seq}
1\to [\As(Q,\rho),\As(Q,\rho)]\to \As(Q)\to \As(Q)_\Ab\to 1.
\end{equation}
The abelianization of $\As(Q)$ is known to be free abelian (see Proposition \ref{pro:asq-abelianization}).
In particular, if $\As(Q)_\Ab\cong\Z$ then the extension \eqref{eq:intro-comm-seq} splits, and 
we obtain an isomorphism 
\[\As(Q)\cong [\As(Q,\rho),\As(Q,\rho)]\rtimes\Z.\]
Third, the associated groups of a symmetric quandle and that of the underlying quandle
form a pullback diagram. More precisely, the following commutative square
\begin{equation}\label{eq:intro-pullback}
\begin{tikzcd}
\As(Q)\arrow{r}{\Ab} \arrow[d,"\pi_Q"']
& \As(Q)_{\Ab}\arrow{d}{(\pi_Q)_\Ab}\\
\As(Q,\rho)\arrow{r}{\Ab}& \As(Q,\rho)_{\Ab}
\end{tikzcd}
\end{equation}
is a pullback diagram, where $\Ab$ denotes the abelianization map,
and the vertical arrows are the canonical surjection $\pi_Q$ and the induced map, respectively
(Theorem \ref{thm:pullback}).
Note that the right-hand side of the square is easy to understand since $\As(Q)_\Ab$ is free abelian and 
$\As(Q,\rho)_\Ab$ is a direct sum of copies of $\Z$ and $\Z/2\Z$ (Theorem \ref{thm:abelianization}).
In particular, the homomorphism 
\[
\As(Q)\to\As(Q,\rho)\times\As(Q)_\Ab,\quad
g\mapsto (\pi_Q(g),[g])
\]
is injective.
Moreover, the induced square of Eilenberg-MacLane spaces
\[\begin{tikzcd}
K(\As(Q),1)\arrow{r} \arrow[d]
& K(\As(Q)_{\Ab},1)\arrow{d}\\
K(\As(Q,\rho),1)\arrow{r}& K(\As(Q,\rho)_{\Ab},1)
\end{tikzcd}\]
is a homotopy pullback (Theorem \ref{thm:pullback}).
The right-hand side of the square is easy to understand, since
 $K(\Z,1)\simeq S^1$ and $K(\Z/2\Z,1)\simeq \R P^{\infty}$, as is well-known.

In conclusion, given a symmetric quandle,
we obtain three mutually different characterizations of the associated group
of the underlying quandle---namely, \eqref{eq:intro-central-ext},
\eqref{eq:intro-comm-seq}, and \eqref{eq:intro-pullback}---each 
of which fundamentally relies on the associated group of the symmetric quandle.
In addition, we express the second quandle homology $H_2(Q)$ of the underlying quandle $Q$
in terms of the associated group of the corresponding symmetric quandle $(Q,\rho)$
(Corollary \ref{cor:homology}).

\subsection*{The associated groups of symmetric quandles}
Now let us introduce our results concerning the structure of the associated groups of
symmetric quandles.
First of all, we provide a group-theoretic characterization of the associated groups of symmetric quandles.
Namely, a group $G$ is the associated group of some symmetric quandle if and only if
$G$ admits a group presentation $\langle X\mid R\rangle$ in which every
defining relation is of the form
\[
w^{-1}xw=y^\epsilon\quad (x,y\in X, w\in F(X),\epsilon\in\{\pm 1\}),
\]
where $F(X)$ is the free
group on $X$ (Corollary \ref{cor:char-As}).
We refer to such a presentation as a \emph{twisted Wirtinger presentation} in this paper, 
as it generalizes Wirtinger presentations \eqref{eq:Wirtinger-ours}.
In addition to groups admitting Wirtinger presentations mentioned above,
examples of groups admitting twisted Wirtinger presentations
include Coxeter groups,
 virtual braid groups,
loop braid groups (also known as welded braid groups),
 cactus groups,
and twisted Artin groups  \cite{MR2672155}.
Twisted Artin groups include, for example,
the fundamental group of the Klein bottle and
Baumslag--Solitar groups $B(k,k+1)$ for $k\geq 1$.

Further, it was shown in \cites{MR3363811,MR1841759} that a
group $G$ admits a \emph{finite} twisted Wirtinger presentation if and only if
there exists a closed $n$-dimensional manifold smoothly embedded in $\R^{n+2}$
whose complement has fundamental group isomorphic to $G$ (see Example \ref{eg:submfd}).
Moreover, in this situation, a group $G$ admits a Wirtinger presentation 
if and only if the manifold is orientable.
All such groups arise as associated groups of some symmetric quandles.
Conversely, we believe that groups admitting twisted Wirtinger presentations can be studied from the viewpoint of the associated groups of symmetric quandles.

Second, we compute the abelianization of the associated group of any symmetric quandle
 (Theorem~\ref{thm:abelianization}) and show that it is isomorphic to the first homology group of the symmetric quandle (Proposition~\ref{cor:1st-symm-qdl-homology}).
Third, from a categorical point of view, we show that the functor $(Q,\rho)\mapsto \As(Q,\rho)$,
from the category of symmetric quandles to the category of groups, is left adjoint to
the functor $G\mapsto (\Conj(G),\Inv(G))$, which assigns
to each group its associated conjugation symmetric quandle (Proposition \ref{pro:adjoint}).

Finally, we prove that a symmetric quandle $(Q,\rho)$ is embeddable in $\As(Q,\rho)$ if and only
if the underlying quandle $Q$ is embeddable in $\As(Q)$.
In other words,
the natural map $Q\to\As(Q,\rho)$ given by $x\mapsto s_x$ is injective 
if and only if the natural map $Q\to\As(Q)$ given by $x\mapsto e_x$
is injective  (Theorem \ref{thm:emeddable}).

Prior to this work, little was known about the structure of the associated groups of symmetric quandles or their relationship with the associated groups of the underlying quandles.
This paper presents the first comprehensive study that covers the associated groups of \emph{all} symmetric quandles.

The paper is organized as follows.
Let $\As(Q,\rho)$ denote the associated group of a symmetric quandle $(Q,\rho)$,
and let $\As(Q)$ denote the associated group of the underlying quandle $Q$.
Section \ref{sec:Prelim} provides preliminaries.
In Section \ref{sec:central-ext}, we characterize $\As(Q)$ as a central extension
of $\As(Q,\rho)$ with a free abelian kernel.
Section \ref{sec:pullback} shows that $\As(Q)$ and $\As(Q,\rho)$ 
fit into a pullback diagram.
In Section \ref{sec:group-theoretic-char}, we give a group-theoretic
characterization of $\As(Q,\rho)$.
In Section \ref{sec:abelianization}, we compute the abelianization of $\As(Q,\rho)$
and show that the first symmetric quandle homology of $(Q,\rho)$ is isomorphic to
$\As(Q,\rho)_\Ab$.
Section \ref{sec:adjoint} establishes that the functor $(Q,\rho)\mapsto\As(Q,\rho)$
from the category of symmetric quandles to the category of groups
is left adjoint to the functor $G\mapsto (\Conj(G),\Inv(G))$ from
the category of groups to the category of symmetric quandles.
In Section \ref{sec:2nd-qdl-homology}, we express the second quandle homology of 
a connected quandle $Q$ in terms of 
$\As(Q,\rho)$. 
Section \ref{sec:involutive} applies our results to the case of involutive quandles.
Finally, in Section \ref{sec:embedd}, we prove that a symmetric quandle $(Q,\rho)$
is embeddable in $\As(Q,\rho)$ if and only if the underlying quandle $Q$
is embeddable in $\As(Q)$.

\subsection*{Notation}
For a group $G$, let $G_{\mathrm{Ab}}\coloneqq G/[G,G]$ be the abelianization of $G$.
For an element of $g\in G$, we write $[g]\coloneqq g[G,G]\in G_{\mathrm{Ab}}$.
For a set $X$, the free group on $X$ is denoted by $F(X)$.

\section{Preliminaries}\label{sec:Prelim}
\subsection*{Quandles}
Let $Q$ be a quandle.
A \emph{subquandle} of $Q$ is a subset $R\subset Q$ 
that is closed under the quandle operation, namely, $x \ast y \in R$
for all $x,y\in R$.
For quandles $P,Q$, a map $f\colon P\to Q$ is called a \emph{quandle homomorphism}
(or a \emph{morphism of quandles}) if it satisfies
\[
f(x\ast y)=f(x)\ast f(y)\quad (x,y\in P).
\]
Let  $\As(Q)$ be the associated group of $Q$.
There is a well-defined right $\As(Q)$-action on $Q$ given by
\[
x\cdot e_y\coloneqq x\ast y\quad (x,y\in Q).
\]
This action satisfies
\[
g^{-1}e_xg=e_{x\cdot g}\in\As(Q)\quad (g\in\As(Q),x\in Q).
\]
An orbit of this action is sometimes
called a \emph{connected component} of $Q$.
In particular, if the action is transitive and hence $Q$ consists of a single orbit, then $Q$ is 
called \emph{connected}.
The following proposition is well-known:
\begin{pro}\label{pro:asq-abelianization}
Let $\mathcal{O}$ be a complete set of representatives of the orbits
of the right $\As(Q)$-action on $Q$.
Then the abelianization $\As(Q)_{\mathrm{Ab}}$ is a free abelian group with a basis
$\{[e_x]\mid x\in\mathcal{O}\}$.
\end{pro}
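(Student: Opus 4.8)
The plan is to read off $\As(Q)_{\mathrm{Ab}}$ from the defining presentation and then exhibit an explicit homomorphism onto the free abelian group on $\mathcal{O}$ which identifies the proposed basis with an honest basis.

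First I would abelianize the presentation: $\As(Q)_{\mathrm{Ab}}$ is the abelian group generated by $\{[e_x]\mid x\in Q\}$ subject only to the relations obtained by killing commutators in the relators $e_y^{-1}e_xe_y=e_{x\qdl y}$, that is, $[e_x]=[e_{x\qdl y}]$ for all $x,y\in Q$. Since $x\qdl y=x\cdot e_y=S_y(x)$, these relations say precisely that $[e_x]$ depends only on the orbit of $x$ under the $S_y$'s and their inverses; as $\{e_y\mid y\in Q\}$ generates $\As(Q)$, this is the same as the orbit of $x$ under the full right $\As(Q)$-action. Hence $[e_x]=[e_{x'}]$ whenever $x$ and $x'$ lie in the same connected component, and in particular $\{[e_x]\mid x\in\mathcal{O}\}$ generates $\As(Q)_{\mathrm{Ab}}$.

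Next I would prove linear independence of this generating set. Let $A$ be the free abelian group with basis $\{b_x\mid x\in\mathcal{O}\}$, and define $\varphi\colon\As(Q)\map A$ on generators by $\varphi(e_x)\coloneqq b_{r(x)}$, where $r(x)\in\mathcal{O}$ is the representative of the orbit of $x$. Well-definedness is the only thing to check: the relator $e_y^{-1}e_xe_y=e_{x\qdl y}$ is sent to $-b_{r(x)}+b_{r(x)}+b_{r(x\qdl y)}=b_{r(x\qdl y)}$ compared with $b_{r(x\qdl y)}$ on the other side — equal because $x$ and $x\qdl y$ have the same orbit representative and $A$ is abelian, so the conjugation collapses. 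Thus $\varphi$ is a well-defined homomorphism; since $A$ is abelian it factors through $\bar\varphi\colon\As(Q)_{\mathrm{Ab}}\map A$, and by construction $\bar\varphi([e_x])=b_x$ for $x\in\mathcal{O}$. The $b_x$ being $\Z$-linearly independent, so are the $[e_x]$, $x\in\mathcal{O}$.

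Combining the two steps, $\{[e_x]\mid x\in\mathcal{O}\}$ both generates and is linearly independent, hence is a basis, and $\As(Q)_{\mathrm{Ab}}$ is free abelian (in fact $\bar\varphi$ is an isomorphism onto $A$). The argument is entirely routine; the one place warranting a moment's care is the passage from ``$[e_x]$ is constant on $S$-orbits'' to ``$[e_x]$ is constant on $\As(Q)$-orbits'', which is immediate because the $e_y$ generate $\As(Q)$ and $x\cdot e_y^{-1}=S_y^{-1}(x)$ is again in the orbit of $x$.
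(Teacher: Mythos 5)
Your proof is correct and is exactly the standard argument; the paper states Proposition \ref{pro:asq-abelianization} as well known and gives no proof, so there is nothing to diverge from: abelianizing the presentation shows $[e_x]$ depends only on the $\As(Q)$-orbit of $x$, and the homomorphism $e_x\mapsto b_{r(x)}$ to the free abelian group on $\mathcal{O}$ gives independence, hence a basis. Only a cosmetic slip: the image of the relator should read $-b_{r(y)}+b_{r(x)}+b_{r(y)}=b_{r(x)}$ versus $b_{r(x\qdl y)}=b_{r(x)}$, not $-b_{r(x)}+b_{r(x)}+b_{r(x\qdl y)}$; the conclusion is unaffected.
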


\subsection*{Symmetric quandles}
Let $(Q,\rho)$ be a symmetric quandle.
A symmetric quandle $(R,\tau)$ is called a \emph{symmetric subquandle} 
of $(Q,\rho)$ if $R$ is a subquandle of $Q$ and $\tau=\rho|_R$.
A \emph{symmetric quandle homomorphism} (or a \emph{morphism of symmetric quandles})
 $f\colon (P,\rho_{P})\to (Q,\rho_{Q})$
is a map $f\colon P\to Q$ satisfying
\[
f(x\ast y)=f(x)\ast f(y),\quad f(\rho_{P}(x))=\rho_{Q}(f(x)) \quad (x,y\in P).
\]
\begin{eg}
Let \( G \) be a group. As mentioned in \S\ref{sec:Intro}, 
the conjugation symmetric quandle \( (\Conj(G), \mathrm{Inv}(G)) \) is a fundamental example of a symmetric quandle. A subset \( Q \subset G \) is a symmetric subquandle of \( (\Conj(G), \mathrm{Inv}(G)) \) if and only if it is closed under both conjugation and inversion.
On the other hand, the \emph{core quandle} of \( G \), denoted by \( \mathrm{Core}(G) \), is the set $G$ equipped with the quandle operation
\[
g \ast h \coloneqq h g^{-1} h.
\]
The identity map is a good involution on \( \mathrm{Core}(G) \), and hence \( (\mathrm{Core}(G), \mathrm{id}_G) \) forms a symmetric quandle.
\end{eg}

\begin{eg}
A quandle \( Q \) is called \emph{involutive} (also referred to as \emph{involutory} or a \emph{kei} in the literature) if it satisfies \((x \ast y) \ast y = x\) for all \( x, y \in Q \).  
For instance, core quandles are involutive.
Involutive quandles form an important subclass of quandles~\cite{arXiv:1506.02389}.  
If \( Q \) is involutive, then the identity map \( \mathrm{id}_Q \) is a good involution of \( Q \), and hence \( (Q, \mathrm{id}_Q) \) is a symmetric quandle.  
Conversely, if \( (Q, \mathrm{id}_Q) \) is a symmetric quandle, then \( Q \) must be involutive~\cite{MR2657689}.
Involutive quandles are discussed in \S\ref{sec:involutive}.
\end{eg}

\begin{eg}
Let \( G \) be a group, and let \( \varphi \colon G \to G \) be an automorphism.  
The \emph{generalized Alexander quandle} \( \mathrm{Alex}(G, \varphi) \) associated with \( (G, \varphi) \) is the set \( G \) equipped with the quandle operation
\[
g \ast h \coloneqq \varphi(gh^{-1})h.
\]
If \( G \) is abelian, then \( \mathrm{Alex}(G, \varphi) \) is called the \emph{Alexander quandle} associated with \( (G, \varphi) \).  
It was proved in~\cite{MR4688855} that a generalized Alexander quandle admits a good involution if and only if it is involutive.
\end{eg}

\begin{eg}\label{eg:submfd}
Let $K$ be a (PL and locally flat or smooth) proper $n$-submanifold of an $(n+2)$-manifold $W$.  
The fundamental symmetric quandle $(Q(W, K), \rho)$ associated to the pair $(W, K)$ is defined in \cites{MR2371714,MR2657689}.  
If $W = \mathbb{R}^{n+2}$, then the associated group of the symmetric quandle $(Q(W, K), \rho)$  
is shown to be isomorphic to the fundamental group of the complement  
$\pi_1(\mathbb{R}^{n+2} \setminus K)$ in \cite{MR3363811}.
\end{eg}

\begin{eg}
Let \( Q \) be a quandle, and let \( \overline{Q} \) be a copy of \( Q \).  
One can define a quandle structure on the disjoint union  
\( D(Q) \coloneqq Q \sqcup \overline{Q} \), along with a good involution  
\( \rho \colon D(Q) \to D(Q) \), as follows.  
For each \( x \in Q \), denote by \( \bar{x} \in \overline{Q} \) the corresponding element.  
The quandle operation on \( D(Q) \) is defined by:
\[
\begin{aligned}
&x \ast y = x \ast y \in Q, 
&&x \ast \bar{y} = S_y^{-1}(x) \in Q, \\
&\bar{x} \ast y = \overline{x \ast y} \in \overline{Q}, 
&&\bar{x} \ast \bar{y} = \overline{S_y^{-1}(x)} \in \overline{Q}.
\end{aligned}
\]
The good involution \( \rho \colon D(Q) \to D(Q) \) is defined by  
\( \rho(x) = \bar{x} \) and \( \rho(\bar{x}) = x \).  
The symmetric quandle \( (D(Q), \rho) \) was introduced by Kamada~\cite{MR2371714},  
where it is referred to as the \emph{well-involuted double cover} of \( Q \).
In \cite{arXiv:2010.09983}, \( (D(Q), \rho) \) is called a \emph{symmetric double} of $Q$. 

\end{eg}

\section{Associated groups of quandles and central extensions}\label{sec:central-ext}
Now let $(Q,\rho)$ be a symmetric quandle and
$\As(Q,\rho)$ be the associated group.
Let $\pi_Q\colon \As(Q)\to\As(Q,\rho)$ be the canonical surjection defined by
$e_x\mapsto s_x$ $(x\in Q)$,
and denote the kernel of $\pi_Q$ by  $Z(Q,\rho)$.
Define $\sim$ as the equivalence relation on $Q$
generated by:
\begin{enumerate}
\item $x\sim x\ast y$\quad $(x,y\in Q)$,
\item $x\sim \rho(x)$\quad $(x\in Q)$.
\end{enumerate}
If $x,y\in Q$ belong to the same $\As(Q)$-orbit,
then $x\sim y$ by (1).
Let $\mathcal{C}=\{x_\lambda\mid\lambda\in\Lambda\}$ 
be a complete set of representatives of
the set of equivalence classes $Q/{\sim}$.
Now we can present the main result of this section:
\begin{thm}\label{thm:main}
The kernel $Z(Q,\rho)$ of the surjection
$\pi_Q\colon \As(Q) \to \As(Q,\rho)$ is central in $\As(Q)$, 
and it is a free abelian group 
with a basis $\{e_x e_{\rho(x)} \mid x \in \mathcal{C}\}$.
Consequently, there exists a
central extension of the form
\[
0\to\Z^{\oplus \mathcal{C}}\to\As(Q)\xrightarrow{\pi_Q}\As(Q,\rho)\to 1,
\]
where $\Z^{\oplus \mathcal{C}}$ is the free abelian group
with a basis $\mathcal{C}$.
Moreover, we have 
\[Z(Q,\rho) \cap [\As(Q), \As(Q)] = 1.\]
\end{thm}
The following corollary is an immediate consequence of the above theorem:
\begin{cor}\label{cor:commutator}
The canonical surjection $\pi_Q\colon\As(Q)\to\As(Q,\rho)$ 
induces an isomorphism
$[\As(Q),\As(Q)]\cong [\As(Q,\rho),\As(Q,\rho)]$.
\end{cor}
The proof of Theorem \ref{thm:main} is divided into several lemmas.
We begin with the following:
\begin{lem}\label{lem:commute}
Let $x\in Q$. Then $e_xe_{\rho(x)}=e_{\rho(x)}e_x$ in $\As(Q)$,
and the element $e_xe_{\rho(x)}$ belongs to the center of $\As(Q)$.
\end{lem}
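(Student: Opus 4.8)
The plan is to establish the commutation relation $e_xe_{\rho(x)}=e_{\rho(x)}e_x$ directly from the defining relation of $\As(Q)$ together with the quandle and good-involution axioms, and then to obtain centrality by checking that $e_xe_{\rho(x)}$ is invariant under conjugation by an arbitrary generator $e_z$ ($z\in Q$); since these elements generate $\As(Q)$, this will suffice.

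For the commutation relation, I would apply the defining relation $e_y^{-1}e_xe_y=e_{x\qdl y}$ with $y=\rho(x)$ to get $e_{\rho(x)}^{-1}e_xe_{\rho(x)}=e_{x\qdl\rho(x)}$, then observe that good-involution axiom (2) gives $x\qdl\rho(x)=S_x^{-1}(x)$ while quandle axiom (1) gives $S_x(x)=x\qdl x=x$, hence $S_x^{-1}(x)=x$ and $x\qdl\rho(x)=x$. Thus $e_{\rho(x)}^{-1}e_xe_{\rho(x)}=e_x$, which is exactly the asserted commutation.

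For centrality, fix $z\in Q$ and rewrite $e_z^{-1}e_xe_{\rho(x)}e_z$ by inserting $e_{\rho(x)}^{-1}e_{\rho(x)}$, obtaining $e_z^{-1}e_x\bigl(e_{\rho(x)}e_ze_{\rho(x)}^{-1}\bigr)e_{\rho(x)}$. The computation hinges on the identity $e_ae_be_a^{-1}=e_{b\qdl\rho(a)}$, which follows from the defining relation together with good-involution axiom (2) in the form $S_{\rho(a)}=S_a^{-1}$ (indeed $e_a^{-1}e_{b\qdl\rho(a)}e_a=e_{(b\qdl\rho(a))\qdl a}=e_b$). Using it with $a=\rho(x)$, $b=z$ and $\rho(\rho(x))=x$ yields $e_{\rho(x)}e_ze_{\rho(x)}^{-1}=e_{z\qdl x}$, so the expression becomes $e_z^{-1}e_xe_{z\qdl x}e_{\rho(x)}$; finally the defining relation in the form $e_xe_{z\qdl x}=e_x(e_x^{-1}e_ze_x)=e_ze_x$ collapses it to $e_z^{-1}(e_ze_x)e_{\rho(x)}=e_xe_{\rho(x)}$, proving centrality.

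The main (and fairly mild) obstacle is to pick the right reorganisation of $e_z^{-1}e_xe_{\rho(x)}e_z$: the naive split $(e_z^{-1}e_xe_z)(e_z^{-1}e_{\rho(x)}e_z)=e_{x\qdl z}e_{\rho(x)\qdl z}=e_{x\qdl z}e_{\rho(x\qdl z)}$ merely reproduces an expression of the same shape, and is therefore circular; one must instead move $e_z$ past $e_{\rho(x)}$ first to create the factor $e_{z\qdl x}$ and then exploit the cancellation $e_xe_{z\qdl x}=e_ze_x$. Once centrality is available, the commutation relation also follows formally (conjugate $e_xe_{\rho(x)}$ by $e_{\rho(x)}$ and cancel $e_{\rho(x)}$ on the right), but the short direct argument above is worth keeping.
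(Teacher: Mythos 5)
Your proof is correct. The first assertion is handled exactly as in the paper: $x\qdl\rho(x)=S_x^{-1}(x)=x$ gives $e_{\rho(x)}^{-1}e_xe_{\rho(x)}=e_x$. For centrality, however, you verify the equivalent condition from the other side: you show $e_z^{-1}(e_xe_{\rho(x)})e_z=e_xe_{\rho(x)}$ for each generator $e_z$, which forces you to introduce the auxiliary identity $e_ae_be_a^{-1}=e_{b\qdl\rho(a)}$ and the insertion of $e_{\rho(x)}^{-1}e_{\rho(x)}$; each step checks out (in particular the use of $S_{\rho(a)}=S_a^{-1}$ and $\rho(\rho(x))=x$ is legitimate), so the argument stands. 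The paper instead conjugates a generator by the element, computing
\[
(e_xe_{\rho(x)})^{-1}e_y(e_xe_{\rho(x)})=e_{\rho(x)}^{-1}e_{y\qdl x}e_{\rho(x)}=e_{(y\qdl x)\qdl\rho(x)}=e_y,
\]
which is a one-line consequence of the same fact $S_{\rho(x)}=S_x^{-1}$. Note that the ``circularity'' you point out for the naive split is an artifact of your chosen direction of conjugation: splitting the conjugation of $e_y$ by $e_xe_{\rho(x)}$ (rather than of $e_xe_{\rho(x)}$ by $e_z$) makes the naive factorization work immediately and dispenses with both the auxiliary identity and the insertion trick. Both routes prove the lemma; the paper's is shorter, while yours has the minor merit of exhibiting explicitly that conjugation by any $e_z$ fixes the element $e_xe_{\rho(x)}$ itself, a fact reproved later in Lemma~\ref{lem:comm-generators} in the sharper form $e_{x\qdl z}e_{\rho(x\qdl z)}=e_xe_{\rho(x)}$.
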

\begin{proof} Observe that
\[
x\ast \rho(x)=S_x^{-1}(x)=x,
\]
and that
\[
e_{\rho(x)}^{-1}e_xe_{\rho(x)}=e_{x\ast\rho (x)}=e_x,
\]
from which the first assertion follows.
For the second assertion, consider any $y\in Q$. We compute:
\begin{align*}
(e_xe_{\rho(x)})^{-1}e_y(e_xe_{\rho(x)})&=
e_{\rho(x)}^{-1}(e_x^{-1}e_ye_x)e_{\rho(x)}\\
&=e_{\rho(x)}^{-1}e_{y\ast x}e_{\rho(x)}=e_{(y\ast x)\ast\rho(x)}=e_y.
\end{align*}
Hence, $e_x e_{\rho(x)}$ commutes with every generator $e_y$ for $y \in Q$, 
and thus lies in the center of $\As(Q)$.
\end{proof}

\begin{lem}\label{lem:central}
The kernel $Z {(Q,\rho)}$ of the canonical surjection 
$\pi_Q\colon\As(Q)\to\As(Q,\rho)$
is generated by $\{e_xe_{\rho(x)}\mid x\in Q\}$, and 
hence it is contained in the center of $\As(Q)$.
Consequently, the group extension
\[ 1\to Z {(Q,\rho)}\to\As(Q)
\to\As(Q,\rho)\to 1\]
is a central extension.
\end{lem}
\begin{proof}
By the definition of $\As(Q,\rho)$, 
the kernel $Z {(Q,\rho)}$ is the normal closure in $\As(Q)$ of the subset
\[\{e_x e_{\rho(x)}\mid x\in Q\}\subset\As(Q).\]
Since $g^{-1}(e_xe_{\rho(x)})g=e_xe_{\rho(x)}$ for all $g\in\As(Q)$
by Lemma \ref{lem:commute},
the kernel $Z {(Q,\rho)}$ is generated by $\{e_xe_{\rho(x)}\mid x\in Q\}$,
hence verifying the lemma.
\end{proof}

\begin{lem}\label{lem:comm-generators}
Let $x,y\in Q$.
If $x\sim y$ then $e_xe_{\rho(x)}=e_ye_{\rho(y)}\in Z {(Q,\rho)}$.
\end{lem}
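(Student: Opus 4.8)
The plan is to exploit that $\sim$ is, by definition, the \emph{smallest} equivalence relation on $Q$ containing the two families of pairs (1) $(x, x\ast y)$ and (2) $(x,\rho(x))$. Note that $R\coloneqq\{(x,y)\in Q\times Q\mid e_xe_{\rho(x)}=e_ye_{\rho(y)}\}$ is itself an equivalence relation on $Q$ (reflexivity, symmetry and transitivity are clear). Hence, to conclude $\sim\,\subseteq R$ — which is exactly the assertion of the lemma — it suffices to check that both generating families are contained in $R$; the general case then follows by chaining finitely many such equalities along a path witnessing $x\sim y$. The final membership $e_xe_{\rho(x)}\in Z(Q,\rho)$ is immediate from Proposition \ref{lem:central}.

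For the generating pairs of type (2), with the pair $(x,\rho(x))$, the required identity reads
\[
e_xe_{\rho(x)}=e_{\rho(x)}e_{\rho(\rho(x))}=e_{\rho(x)}e_x,
\]
using that $\rho$ is an involution; this is precisely the commutation statement established in Lemma \ref{lem:commute}.

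For the generating pairs of type (1), with the pair $(x, x\ast z)$ for $z\in Q$, I would rewrite each of the two relevant generators via the right $\As(Q)$-action. From $e_z^{-1}e_xe_z=e_{x\ast z}$ and the good-involution axiom $\rho(x\ast z)=\rho(x)\ast z$ one gets $e_z^{-1}e_{\rho(x)}e_z=e_{\rho(x)\ast z}=e_{\rho(x\ast z)}$. Multiplying these two identities yields
\[
e_{x\ast z}\,e_{\rho(x\ast z)}=\bigl(e_z^{-1}e_xe_z\bigr)\bigl(e_z^{-1}e_{\rho(x)}e_z\bigr)=e_z^{-1}\bigl(e_xe_{\rho(x)}\bigr)e_z=e_xe_{\rho(x)},
\]
where the last step uses that $e_xe_{\rho(x)}$ lies in the center of $\As(Q)$, again by Lemma \ref{lem:commute}. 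This shows $(x,x\ast z)\in R$ and completes the reduction.

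There is essentially no serious obstacle here: everything reduces to the two one-line computations above together with the centrality proved in Lemma \ref{lem:commute}. The only point that deserves explicit (if routine) mention is the opening reduction — that verifying the identity on the generating pairs of $\sim$ is enough — which is just the universal property of the generated equivalence relation applied to the equivalence relation $R$.
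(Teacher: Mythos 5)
Your proof is correct and follows essentially the same route as the paper: you reduce to the two generating moves of $\sim$ (made explicit via the auxiliary equivalence relation $R$, which the paper leaves implicit in its ``it suffices to show''), and the two computations — using $\rho(x\ast y)=\rho(x)\ast y$, the conjugation relations, and the commutation/centrality from Lemma \ref{lem:commute}, with membership in $Z(Q,\rho)$ from Proposition \ref{lem:central} — match the paper's verbatim.
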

\begin{proof}
In view of the definition of the equivalence relation $\sim$, it suffices to show that
$
e_{x \ast y} e_{\rho(x \ast y)} = e_x e_{\rho(x)} $
and
$
e_{\rho(x)} e_{\rho(\rho(x))} = e_x e_{\rho(x)}
$. 
We compute
\begin{align*}
e_{x\ast y}e_{\rho(x\ast y)}&=e_{x\ast y}e_{\rho(x)\ast y}=
(e_y^{-1}e_xe_y)(e_y^{-1}e_{\rho(x)}e_y) \\
&=e_y^{-1}e_xe_{\rho(x)}e_y=e_xe_{\rho(x)}
\end{align*}
and
\begin{align*}
&e_{\rho(x)}e_{\rho(\rho(x))}=e_{\rho(x)}e_x=e_xe_{\rho(x)}
\end{align*}
as desired.
\end{proof}

\begin{lem}
Let $\mathcal{C} = \{x_\lambda \mid \lambda \in \Lambda\}$ be as above, and let $O(x_\lambda)$ denote the $\As(Q)$-orbit of $x_\lambda \in \mathcal{C}$.
For each $\lambda \in \Lambda$, exactly one of the following two conditions holds:
\begin{enumerate}
\item $\rho(x_\lambda)\in O(x_\lambda)$,
\item $\rho(x_\lambda)\not\in O(x_\mu)$ for all 
$\mu\in\Lambda$. 
\end{enumerate}
\end{lem}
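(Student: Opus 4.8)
The plan is to reduce the statement to a clean description of the $\sim$-classes in terms of $\As(Q)$-orbits. The crucial structural input is that the good involution $\rho$ is equivariant for the right $\As(Q)$-action on $Q$: from the axiom $\rho(x\ast y)=\rho(x)\ast y$ one reads off $\rho(x\cdot e_y)=\rho(x)\cdot e_y$ for every generator $e_y$, and since the $e_y$ generate $\As(Q)$ it follows that $\rho(x\cdot g)=\rho(x)\cdot g$ for all $g\in\As(Q)$. Consequently $\rho$ permutes the $\As(Q)$-orbits, with $\rho(O(x))=O(\rho(x))$ for every $x\in Q$, and this is an involution on the set of orbits since $\rho^2=\mathrm{id}$.

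Next I would identify the $\sim$-class of an arbitrary element. The relation generated by (1) is exactly ``lying in a common $\As(Q)$-orbit'': one inclusion is the remark already noted before the statement, and for the other it suffices to observe that $S_y^{-1}(x)\ast y=x$, so $S_y^{-1}(x)\sim S_y^{-1}(x)\ast y=x$, which accounts for the inverse generators $e_y^{-1}$. Hence $\sim$ is the equivalence relation generated by ``same $\As(Q)$-orbit'' together with $z\sim\rho(z)$. Using the equivariance from the first step, applying $\rho$ to the orbit $O(x)$ yields $O(\rho(x))$, and applying $\rho$ once more returns to $O(x)$; a short induction on the length of a chain of elementary relations then shows that the $\sim$-class of any $x\in Q$ is precisely $O(x)\cup O(\rho(x))$.

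With this description the lemma follows by bookkeeping. If $\rho(x_\lambda)\in O(x_\lambda)$, condition (1) holds and condition (2) fails on taking $\mu=\lambda$. Otherwise $\rho(x_\lambda)\notin O(x_\lambda)$, so the $\sim$-class of $x_\lambda$ is the disjoint union $O(x_\lambda)\sqcup O(\rho(x_\lambda))$; if we had $\rho(x_\lambda)\in O(x_\mu)$ for some $\mu\in\Lambda$, then $x_\mu\sim\rho(x_\lambda)\sim x_\lambda$, forcing $x_\mu=x_\lambda$ since $\mathcal{C}$ contains exactly one representative of each $\sim$-class, whence $\rho(x_\lambda)\in O(x_\lambda)$, a contradiction. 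Thus condition (2) holds in this case, and (1) and (2) are manifestly mutually exclusive. There is no serious obstacle: the only point demanding care is the equivariance of $\rho$ and the resulting two-orbit description of $\sim$-classes, after which everything is routine manipulation of the representatives in $\mathcal{C}$.
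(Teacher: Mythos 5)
Your proof is correct, and at the decisive point it coincides with the paper's: the paper's entire argument is the observation that if $\rho(x_\lambda)\notin O(x_\lambda)$ but $\rho(x_\lambda)\in O(x_\mu)$, then $x_\lambda\sim\rho(x_\lambda)\sim x_\mu$, contradicting that $\mathcal{C}$ contains exactly one representative of each $\sim$-class (plus the trivial check that (1) and (2) exclude each other). What you do differently is to front-load a structural analysis — the $\As(Q)$-equivariance of $\rho$ (including the inverse-generator case via $S_y^{-1}$) and the identification of each $\sim$-class with $O(x)\cup O(\rho(x))$ — none of which is actually invoked in your final bookkeeping step: there you only use the generating relations $x\sim x\ast y$, $x\sim\rho(x)$ and uniqueness of representatives, exactly as the paper does. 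So for this lemma the extra machinery is superfluous, though not wasted effort in the context of the paper: the equivariance computation you sketch is precisely what is established later in Proposition~\ref{lem:embedd-orbit}, where the description of $\sim$-classes as unions of at most two $\As(Q)$-orbits is genuinely needed. In short, your route proves more than required and then falls back on the paper's three-line contradiction; the paper's proof isolates just that contradiction and defers the equivariance argument to where it is used.
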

\begin{proof}
Suppose that $\rho(x_\lambda)\not\in O(x_\lambda)$ and that
$\rho(x_\lambda)\in O(x_\mu)$ for some $\mu\not=\lambda\in\Lambda$.
Then
$x_\lambda\sim\rho(x_\lambda)$ by the definition of 
the equivalence relation,
and $\rho(x_\lambda)\sim x_\mu$ because $\rho(x_\lambda)$ and $x_\mu$ belong
to the same orbit.
Hence $x_\lambda\sim x_\mu$,
but this contradicts the assumption that
$\mathcal{C}$ is a complete set of representatives.
\end{proof}
\begin{lem}\label{lem:orbits}
Define
\begin{align*}
\Lambda_1 & \coloneqq\{\lambda\in\Lambda
\mid\rho(x_\lambda)\in O(x_\lambda)\}, \\
\Lambda_2 & \coloneqq\{\lambda\in\Lambda
\mid\rho(x_\lambda)\not\in O(x_\mu)\ \text{{for all} $\mu\in\Lambda$}\}
\end{align*}
so that $\Lambda=\Lambda_1\sqcup\Lambda_2$.
Then all elements in
\[
\{{x_{\lambda}}\mid \lambda\in\Lambda\}\cup
 \{{\rho(x_{\mu})}\mid \mu\in\Lambda_2\}\subset Q
 \]
lie in mutually distinct $\As(Q)$-orbits. Consequently,
\[
\{[e_{x_{\lambda}}]\mid \lambda\in\Lambda\}\cup
 \{[e_{\rho(x_{\mu})}]\mid \mu\in\Lambda_2\}\subset\As(Q)_{\mathrm{Ab}}
 \]
is linearly independent in the free abelian group $\As(Q)_{\mathrm{Ab}}$.
\end{lem}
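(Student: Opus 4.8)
The plan is to reduce everything to a single observation: the partition of $Q$ into $\As(Q)$-orbits refines the partition into $\sim$-equivalence classes. Indeed $x \sim x \ast y$ is one of the generating relations of $\sim$, so if $x$ and $y$ lie in a common orbit then $x \sim y$; equivalently, two elements of $Q$ lying in distinct $\sim$-classes automatically lie in distinct $\As(Q)$-orbits. With this in hand I would prove the mutual distinctness of orbits by splitting the displayed union into three cases according to the types of elements involved.

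First, for $\lambda \neq \lambda'$ in $\Lambda$, the representatives $x_\lambda$ and $x_{\lambda'}$ lie in distinct $\sim$-classes by the choice of $\mathcal{C}$, hence $O(x_\lambda) \neq O(x_{\lambda'})$. Second, suppose $\mu \neq \mu'$ in $\Lambda_2$ with $O(\rho(x_\mu)) = O(\rho(x_{\mu'}))$; then $\rho(x_\mu) \sim \rho(x_{\mu'})$, and since $x_\mu \sim \rho(x_\mu)$ and $x_{\mu'} \sim \rho(x_{\mu'})$ by relation (2) defining $\sim$, we would get $x_\mu \sim x_{\mu'}$, contradicting the choice of $\mathcal{C}$; hence $O(\rho(x_\mu)) \neq O(\rho(x_{\mu'}))$. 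Third, for $\lambda \in \Lambda$ and $\mu \in \Lambda_2$, the very definition of $\Lambda_2$ gives $\rho(x_\mu) \notin O(x_\nu)$ for every $\nu \in \Lambda$, in particular $\rho(x_\mu) \notin O(x_\lambda)$, so $O(x_\lambda) \neq O(\rho(x_\mu))$. (Since elements in distinct orbits are distinct, these three cases also show the listed elements are pairwise distinct.) Combining the cases yields that all elements of the displayed set lie in mutually distinct $\As(Q)$-orbits.

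For the second assertion I would invoke Proposition \ref{pro:asq-abelianization}: $\As(Q)_{\mathrm{Ab}}$ is free abelian with basis indexed by the $\As(Q)$-orbits, the orbit of $x$ corresponding to the class $[e_x]$, which is well defined because $e_{x \cdot g} = g^{-1} e_x g$ forces $[e_{x \cdot g}] = [e_x]$ in $\As(Q)_{\mathrm{Ab}}$. Since the elements $x_\lambda$ $(\lambda \in \Lambda)$ and $\rho(x_\mu)$ $(\mu \in \Lambda_2)$ lie in mutually distinct orbits, the classes $[e_{x_\lambda}]$ and $[e_{\rho(x_\mu)}]$ are pairwise distinct basis elements of $\As(Q)_{\mathrm{Ab}}$, hence linearly independent.

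There is no genuine obstacle here; the argument is bookkeeping resting on the coarsening of partitions and on the definitions of $\Lambda_1$ and $\Lambda_2$. The one point requiring care is the cross case $O(x_\lambda)$ versus $O(\rho(x_\mu))$: this is precisely where one needs the strong conclusion ``$\rho(x_\mu) \notin O(x_\nu)$ for all $\nu$'' built into the definition of $\Lambda_2$ (the weaker ``$\rho(x_\mu) \notin O(x_\mu)$'' would not suffice), and where the preceding lemma, which guarantees $\Lambda = \Lambda_1 \sqcup \Lambda_2$, is implicitly used.
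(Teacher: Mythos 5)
Your proof is correct and follows essentially the same route as the paper: the paper also reduces to the one nontrivial case $O(\rho(x_\lambda))\neq O(\rho(x_\mu))$ for $\lambda\neq\mu$ (treating the remaining comparisons as immediate from the choice of $\mathcal{C}$ and the definition of $\Lambda_2$), settles it via $x_\lambda\sim\rho(x_\lambda)\sim\rho(x_\mu)\sim x_\mu$, and then cites Proposition \ref{pro:asq-abelianization} for the linear independence. Your version merely makes the easy cases and the well-definedness of $[e_x]$ on orbits explicit.
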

\begin{proof}
As for the first claim,
it suffices to prove that $O(\rho(x_\lambda))\not=O(\rho(x_\mu))$ 
whenever $\lambda\not=\mu$.
Indeed, if $O(\rho(x_\lambda))=O(\rho(x_\mu))$, then we have
\[x_\lambda\sim\rho(x_\lambda)\sim\rho(x_\mu)\sim x_\mu,\]
which implies $x_\lambda\sim x_\mu$ and hence $\lambda=\mu$.
This completes the proof of the first claim.
The second claim follows from Proposition \ref{pro:asq-abelianization}.
\end{proof}
\begin{rem}
Actually, the subset
$\{{x_{\lambda}}\mid \lambda\in\Lambda\}\cup
 \{{\rho(x_{\mu})}\mid \mu\in\Lambda_2\}\subset Q$ forms a complete set
 of representatives of $\As(Q)$-orbits of $Q$. See Proposition \ref{lem:embedd-orbit}.
\end{rem}

We are now ready to prove Theorem \ref{thm:main}.

\begin{proof}[Proof of Theorem \ref{thm:main}]
Since the subgroup $Z(Q,\rho)$ is central and is generated by
$\{e_xe_{\rho(x)}\mid x\in Q\}$ by Lemma \ref{lem:central},
it suffices to show that
\[\{e_xe_{\rho(x)}\mid x\in\mathcal{C}\}
=\{e_{x_\lambda}e_{\rho(x_\lambda)}\mid\lambda\in\Lambda\}\]
is linearly independent in $Z(Q,\rho)$.
Suppose that
\[
\prod_{\lambda\in\Lambda}(e_{x_\lambda}e_{\rho(x_\lambda)})^{c_\lambda}=1
\in Z(Q,\rho),
\]
where $c_\lambda\in\Z$ and $c_\lambda=0$ 
except finitely many $\lambda\in\Lambda$.
In $\As(Q)_{\mathrm{Ab}}$, we have
 \begin{align*}
 1=\prod_{\lambda\in\Lambda}[e_{x_\lambda}e_{\rho(x_\lambda)}]^{c_\lambda}
 =\prod_{\lambda\in\Lambda_1}[e_{x_\lambda}e_{\rho(x_\lambda)}]^{c_\lambda}
 \prod_{\lambda\in\Lambda_2}[e_{x_\lambda}e_{\rho(x_\lambda)}]^{c_\lambda}.
  \end{align*}
  If $\lambda\in\Lambda_1$ then $[e_{x_\lambda}]=[e_{\rho(x_\lambda)}]$,
  hence the right-hand side equals 
 \begin{align*}
 \prod_{\lambda\in\Lambda_1}[e_{x_\lambda}]^{2c_\lambda}
  \prod_{\lambda\in\Lambda_2}[e_{x_\lambda}]^{c_\lambda}
  \prod_{\lambda\in\Lambda_2}[e_{\rho(x_\lambda)}]^{c_\lambda}=1.
 \end{align*}
 
Since $\{[e_{x_{\lambda}}]\mid \lambda\in\Lambda\}\cup
 \{[e_{\rho(x_{\mu})}]\mid \mu\in\Lambda_2\}$ is linearly independent
 by Lemma \ref{lem:orbits},
 $c_\lambda=0$ for all $\lambda\in\Lambda$.
 This proves that $\{[e_xe_{\rho(x)}]\mid x\in\mathcal{C}\}$
 is linearly independent in $\As(Q)_{\mathrm{Ab}}$
 and $\{e_xe_{\rho(x)}\mid x\in\mathcal{C}\}$ is linearly 
 independent in $Z(Q,\rho)$.
Since $\{[e_xe_{\rho(x)}]\mid x\in\mathcal{C}\}$
 is linearly independent, 
the composition
\[Z_{(Q,\rho)}\hookrightarrow\As(Q)\twoheadrightarrow\As(Q)_{\mathrm{Ab}}\]
is injective, hence verifying $Z_{(Q,\rho)}\cap [\As(Q),\As(Q)]=1$.
\end{proof}

\section{Pullbacks}\label{sec:pullback}
In this section, we show that the associated groups $\As(Q)$ and $\As(Q,\rho)$ 
form a pullback diagram.
The proof is similar to the arguments in  \cites{MR4447657,hasegawa-thesis, MR3993765}.
First, we invoke the following result due to Kishimoto
\cite{MR3993765}:
\begin{pro}[Kishimoto \cite{MR3993765}] 
\label{pro:Kishimoto}
Suppose that there is a commutative square of groups
\[\begin{tikzcd}
G_1\arrow{r}{f_1} \arrow[d,"g"']& H_1\arrow{d}{h}\\
G_2\arrow{r}{f_2}&H_2
\end{tikzcd}\]
where $f_1$ is surjective.
Then the square is a pullback if and only if
the canonical map $\ker f_1\to\ker f_2$ is an isomorphism.
If the square is a pullback and $f_1,f_2$ are surjective, then the induced square
of Eilenberg-MacLane spaces
\[\begin{tikzcd}
K(G_1,1)\arrow{r} \arrow[d]& K(H_1,1)\arrow{d}\\
K(G_2,1)\arrow{r} &K(H_2,1)
\end{tikzcd}\]
is a homotopy pullback.
\end{pro}

Now consider the following commutative square:
\begin{equation}\label{eq:comm-square}
\begin{tikzcd}
\As(Q)\arrow{r}{\Ab_1} \arrow[d,"\pi_Q"']
& \As(Q)_{\Ab}\arrow{d}{(\pi_Q)_\Ab}\\
\As(Q,\rho)\arrow{r}{\Ab_2}& \As(Q,\rho)_{\Ab},
\end{tikzcd}\end{equation}
where $\pi_Q$ is the canonical surjection, $(\pi_Q)_\Ab$ is the homomorphism induced by $\pi_Q$,
and $\Ab_1,\Ab_2$ are the abelianization maps.
In view of Corollary \ref{cor:commutator}, the canonical homomorphism
$\ker \Ab_1=[\As(Q),\As(Q)]\to\ker \Ab_2=[\As(Q,\rho),\As(Q,\rho)]$ is an isomorphism.
Thus, Proposition~\ref{pro:Kishimoto} implies the following result:
\begin{thm}\label{thm:pullback}
For any symmetric quandle $(Q,\rho)$, the commutative square
\eqref{eq:comm-square}  is a pullback.
The induced square of Eilenberg-MacLane spaces
\[\begin{tikzcd}
K(\As(Q),1)\arrow{r} \arrow[d]
& K(\As(Q)_{\Ab},1)\arrow{d}\\
K(\As(Q,\rho),1)\arrow{r}& K(\As(Q,\rho)_{\Ab},1)
\end{tikzcd}\]
is a homotopy pullback.
\end{thm}
As an immediate corollary, we have the following:
\begin{cor}
Under the notations of \eqref{eq:comm-square}, for any symmetric quandle $(Q,\rho)$,
the homomorphism
\[
\As(Q)\to \As(Q,\rho)\times\As(Q)_\Ab,\quad g\mapsto (\pi_Q(g), [g])
\]
is injective.
\end{cor}

\section{Group-theoretic characterization of $\As(Q,\rho)$} 
\label{sec:group-theoretic-char}
Majid--Rietsch \cite{MR3065997} (see also Beggs--Majid \cite{MR4292536}*{\S1.7})
studied symmetric quandles under the name IP-quandles
(IP stands for ``inverse property'').
Their motivation came from the context of noncommutative differential geometry.
They introduced the notion of ``covering groups'', which we consider in this section.
As a byproduct, we obtain a group-theoretic characterization of associated groups
of symmetric quandles.

Let $G$ be a group.
Let $(Q,\rho)$ be a symmetric subquandle of
the conjugation symmetric quandle $(\Conj(G),\mathrm{Inv}(G))$.
Then the associated group $\As(Q,\rho)$ is given by
\begin{equation}\label{eq:As-of-conj}
\As(Q,\rho)=\langle \bare_g\ (g\in Q)\mid 
\bare_h^{-1}\bare_g \bare_h
=\bare_{h^{-1}gh}, \ \bare_{g^{-1}}=\bare_g^{-1}\ (g,h\in Q)\rangle,
\end{equation}
and there exists a canonical group homomorphism
$p_Q\colon\As(Q,\rho)\to G$ defined by $s_g\mapsto g$ $(g\in G)$,
which is surjective if $Q$ generates $G$.
Under the assumption that $Q$ generates $G$ and $Q\subset G\setminus\{1\}$,
Majid--Rietsch \cite{MR3065997} proved that the kernel of $p_Q$ is
a central subgroup of $\As(Q,\rho)$ so that
\[
1\to\ker p_{Q}\to \As(Q,\rho)\xrightarrow{p_Q} G\to 1
\]
is a central extension.
A group $G$ is called a \emph{covering group}, as defined by
Majid--Rietsch \cite{MR3065997},
if there exists a symmetric subquandle $(Q,\rho)$
of $(\Conj(G),\mathrm{Inv}(G))$
such that
\begin{enumerate}
\item $Q\subset G\setminus\{1\}$,
\item $Q$ generates $G$,
\item $p_Q\colon\As(Q,\rho)\to G$ is an isomorphism.
\end{enumerate}
Majid and Rietsch \cite{MR3065997} proved that every finite crystallographic reflection group $W$ is a covering group with respect to $(Q, \mathrm{id}_Q)$, where $Q$ is the set of reflections in $W$. 
We generalize their result through the notion of twisted Wirtinger presentations, whose definition we now recall:
\begin{defn}
We say that a group presentation $\langle X\mid R\rangle$ is a 
\emph{twisted Wirtinger presentation} if
each relation in $R$ is of the form 
\[
w^{-1}xw=y^\epsilon\quad (x,y\in X, w\in F(X),\epsilon\in\{\pm 1\}).
\]
In particular, if all the relations in $R$ are of the form
\[
w^{-1}xw=y\quad (x,y\in X, w\in F(X)),
\]
then $\langle X\mid R\rangle$ is called a \emph{Wirtinger presentation}.
\end{defn}

\begin{thm}\label{thm:C-group}
If a group $G$ admits a twisted Wirtinger presentation,
then the group $G$ is a covering group.
\end{thm}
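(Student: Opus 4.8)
\section{Proof proposal}

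The plan is to realize $G$ as a covering group via the ``conjugacy closure'' of the generators. Fix a twisted Wirtinger presentation $\langle X\mid R\rangle$ of $G$, let $\pi\colon F(X)\twoheadrightarrow G$ be the quotient map, and write $\bar x\coloneqq\pi(x)\in G$ for $x\in X$. Let $Q\subset G$ be the union of the $G$-conjugacy classes of $\bar x$ and of $\bar x^{-1}$, taken over those $x\in X$ with $\bar x\neq 1$. By construction $Q$ is closed under conjugation by every element of $G$ (hence a subquandle of $\mathrm{Conj}(G)$) and under inversion (hence $\mathrm{Inv}(G)(Q)=Q$), and $Q\subset G\setminus\{1\}$ because a conjugate of a non-identity element is non-identity. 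Moreover $Q$ contains every $\bar x$ with $\bar x\neq 1$, and the remaining $\bar x$ are trivial, so $Q$ generates $G$. Thus $(Q,\rho)$ with $\rho\coloneqq\mathrm{Inv}(G)|_Q$ is a symmetric subquandle of $(\mathrm{Conj}(G),\mathrm{Inv}(G))$ satisfying conditions (1) and (2) in the definition of a covering group, so $p_Q\colon\As(Q,\rho)\to G$ is surjective. It remains to prove that $p_Q$ is injective, which I will do by exhibiting a two-sided inverse.

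Define a homomorphism $\psi\colon F(X)\to\As(Q,\rho)$ on generators by $\psi(x)\coloneqq s_{\bar x}$ if $\bar x\neq 1$ and $\psi(x)\coloneqq 1$ if $\bar x=1$. The key computation is the identity
\[
\psi(w)^{-1}\,s_g\,\psi(w)=s_{\pi(w)^{-1}g\pi(w)}\qquad(w\in F(X),\ g\in Q),
\]
proved by induction on the length of a reduced word $w$: each letter $x^{\pm1}$ with $\bar x=1$ contributes the identity and may be deleted, while each letter $x^{\pm1}$ with $\bar x\neq1$ moves past $s_g$ using the defining relation $s_h^{-1}s_gs_h=s_{h^{-1}gh}$ of $\As(Q,\rho)$ (or its inverse form), all intermediate elements remaining in $Q$ since $Q$ is conjugation-closed. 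Using this identity together with the relation $s_{g^{-1}}=s_g^{-1}$ of $\As(Q,\rho)$, I then check that $\psi$ respects every defining relation $w^{-1}xw=y^{\epsilon}$ in $R$: if $\bar x\neq1$, then in $G$ we have $\pi(w)^{-1}\bar x\pi(w)=\bar y^{\epsilon}$, so $\psi(w)^{-1}\psi(x)\psi(w)=s_{\pi(w)^{-1}\bar x\pi(w)}=s_{\bar y^{\epsilon}}$, and this equals $\psi(y)^{\epsilon}$ once one observes that $\bar y^{\epsilon}\neq1$ forces $\bar y\neq1$ and that $s_{\bar y^{-1}}=s_{\bar y}^{-1}$; if $\bar x=1$, the relation forces $\bar y=1$ in $G$, and both sides are the identity. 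Hence $\psi$ descends to a homomorphism $\bar\psi\colon G\to\As(Q,\rho)$.

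Finally I verify that $\bar\psi$ and $p_Q$ are mutually inverse. The composite $p_Q\circ\bar\psi\colon G\to G$ fixes each generator $\bar x$ of $G$ (it sends $\bar x$ to $\bar x$ whether or not $\bar x=1$), so $p_Q\circ\bar\psi=\mathrm{id}_G$. For $\bar\psi\circ p_Q\colon\As(Q,\rho)\to\As(Q,\rho)$ it suffices to check it fixes each generator $s_q$; writing $q=\pi(w)^{-1}\bar x^{\pm1}\pi(w)$ for suitable $w\in F(X)$ and $x\in X$ with $\bar x\neq1$, the key identity (and $s_{\bar x}^{-1}=s_{\bar x^{-1}}$, $\bar x^{-1}\in Q$, in the exponent $-1$ case) gives $\bar\psi(q)=\psi(w)^{-1}\psi(x)^{\pm1}\psi(w)=s_{\pi(w)^{-1}\bar x^{\pm1}\pi(w)}=s_q$. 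Therefore $p_Q$ is an isomorphism, condition (3) holds, and $G$ is a covering group.

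I expect the only real friction to be bookkeeping: choosing $Q$ so that it is simultaneously conjugation-closed, inversion-closed, and disjoint from $\{1\}$; handling generators that happen to map to $1$ in $G$ by sending them to the identity of $\As(Q,\rho)$; and tracking the exponent $\epsilon$, which is precisely where the relation $s_{g^{-1}}=s_g^{-1}$ enters. Once the conjugation identity for $\psi$ is in place, both triangle identities follow by checking on generating sets, so there is no substantive obstacle beyond this.
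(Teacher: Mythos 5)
Your proposal is correct and takes essentially the same route as the paper: take $Q$ to be the conjugation closure of the generators and their inverses inside $G$ with $\rho$ the inversion, verify conditions (1) and (2), and show $p_Q$ is an isomorphism by checking that the defining relations hold among the $s_{\bar x}$ and that these elements generate $\As(Q,\rho)$ --- your explicit two-sided inverse $\bar\psi$ just makes the paper's terser ``generated plus surjective'' conclusion explicit. Your separate handling of generators mapping to $1$ is a sensible precaution that the paper passes over in silence (condition (1) there tacitly needs the observation that in a twisted Wirtinger presentation no generator dies, e.g.\ by looking at the abelianization), but this does not change the substance of the argument.
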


\begin{proof}
Let $\langle X\mid R\rangle$ be a twisted Wirtinger presentation of $G$.
Define
\[X^{-}\coloneqq\{x^{-1}\mid x\in X\},\quad
X^{\pm}\coloneqq X\cup X^-,\]
and set
\[
Q\coloneqq \bigcup_{g\in G}g^{-1}X^\pm g.
\]
Let $\rho\colon Q\to Q$ be the involution defined by $g\mapsto g^{-1}$.
Then $(Q,\rho)$ is a symmetric subquandle of $(\Conj(G),\mathrm{Inv}(G))$
satisfying conditions (1) and (2).
We will prove that the surjection $p_Q\colon\As(Q,\rho)\twoheadrightarrow G$
is an isomorphism.
First, we claim that
$\As(Q,\rho)$ is generated by $s_x$ $(x\in X)$.
Given $x\in Q$, we can express $x$ as
\[
x=(x_1x_2\cdots x_k)^{-1}x_0(x_1x_2\cdots x_k)\quad (x_i\in X^\pm).
\]
By the iterated use of relations
$\bare_y^{-1}\bare_x\bare_y=\bare_{x\ast y}=
\bare_{y^{-1}x {y^{}}}$ $(x,y\in Q)$, we have
\[
\bare_x=\bare_{(x_1x_2\cdots x_k)^{-1}x_0(x_1x_2\cdots x_k)}
=(\bare_{x_1}\bare_{x_2}\cdots \bare_{x_k})^{-1}
\bare_{x_0}(\bare_{x_1}\bare_{x_2}\cdots \bare_{x_k}).
\]
If $x_i\in X^-$ then $x_i^{-1}\in X$ and
$\bare_{x_i}=(\bare_{{x_i}^{-1}})^{-1}$, proving that
$\As(Q,\rho)$ is generated by $s_x$ $(x\in X)$.
Now we prove the theorem.
For each relation 
\[
w^{-1}xw=y^\epsilon
\]
in $R$, 
the element $w\in F(X)$ is expressed as a word on $X^\pm$, say
\[w=x_1x_2\cdots x_k\quad (x_i\in X^\pm),\]
then the relation is rephrased by
\begin{equation}\label{eq:C-gp-rel}
(x_1x_2\cdots x_k)^{-1}x(x_1x_2\cdots x_k)=y^\epsilon.
\end{equation}
By the iterated use of relations
$\bare_y^{-1}\bare_x\bare_y
=\bare_{y^{-1}x {y^{}}}$
$(x,y\in Q)$ again,
it follows that
\[
(\bare_{x_1}\cdots \bare_{x_k})^{-1}\bare_x
(\bare_{x_1}\cdots \bare_{x_k})
=\bare_{(x_1\cdots x_k)^{-1}x
(x_1\cdots x_k)}
=\bare_{y^\epsilon}=(\bare_y)^\epsilon 
\]
holds in $\As(Q,\rho)$, which means that the relation 
\eqref{eq:C-gp-rel} is valid in $\As(Q,\rho)$.
Since $\As(Q,\rho)$ is generated by $s_x$ $(x\in X)$ and
the homomorphism $p_Q$ is surjective, $p_Q$ must be an isomorphism.
\end{proof}
As mentioned in the introduction, Coxeter groups admit twisted Wirtinger presentation.
Since finite crystallographic reflection groups are Coxeter groups, 
Theorem \ref{thm:C-group} generalizes a
result of Majid--Rietsch \cite{MR4292536} mentioned above.
As a byproduct of Theorem \ref{thm:C-group}, we obtain the 
following corollary:
\begin{cor}\label{cor:char-As}
For a group $G$, the following two conditions are equivalent:
\begin{enumerate}
\item $G$ is isomorphic to $\As(Q,\rho)$ for some symmetric quandle $(Q,\rho)$.
\item $G$ admits a twisted Wirtinger presentation.
\end{enumerate}
\end{cor}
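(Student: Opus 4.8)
The plan is to prove the two implications separately; both are short given the machinery already established.

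For the implication $(2)\Rightarrow(1)$, I would simply invoke Theorem~\ref{thm:C-group}. Suppose $G$ admits a twisted Wirtinger presentation. Then $G$ is a covering group, so by the very definition of a covering group there is a symmetric subquandle $(Q,\rho)$ of $(\mathrm{Conj}(G),\mathrm{Inv}(G))$ for which the canonical homomorphism $p_Q\colon\As(Q,\rho)\to G$ is an isomorphism. In particular $G\cong\As(Q,\rho)$, which is exactly condition~(1).

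For the implication $(1)\Rightarrow(2)$, I would show that the \emph{defining} presentation of $\As(Q,\rho)$ is already a twisted Wirtinger presentation. Take $X\coloneqq\{\bare_x\mid x\in Q\}$ as generating set; this is legitimate because a twisted Wirtinger presentation does not require the generating set to inject into the presented group, so indexing the generators by the (possibly large) set $Q$ causes no difficulty. The relations $\bare_y^{-1}\bare_x\bare_y=\bare_{x\ast y}$ are of the Wirtinger form $w^{-1}xw=y$ with $w=\bare_y\in F(X)$, $\bare_x\in X$, $\bare_{x\ast y}\in X$. The relations $\bare_{\rho(x)}=\bare_x^{-1}$ can be written as $\bare_{\rho(x)}^{-1}\bare_{\rho(x)}\bare_{\rho(x)}=\bare_x^{-1}$, i.e. in the twisted Wirtinger form $w^{-1}xw=y^\epsilon$ with $w=\bare_{\rho(x)}\in F(X)$ (one could also take $w=1$), the ``$x$''-slot filled by $\bare_{\rho(x)}\in X$, the ``$y$''-slot by $\bare_x\in X$, and $\epsilon=-1$. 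Hence every defining relation is of twisted Wirtinger form, so $G\cong\As(Q,\rho)$ admits such a presentation, giving~(2).

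Since both directions reduce to Theorem~\ref{thm:C-group} on the one hand and a direct inspection of the standard presentation of $\As(Q,\rho)$ on the other, I do not anticipate any genuine obstacle; the only point worth stating explicitly is the observation, used above, that generators in a twisted Wirtinger presentation are permitted to coincide in the presented group, so that the full index set $Q$ may be used as the set $X$.
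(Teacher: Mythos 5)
Your proposal is correct and follows essentially the same route as the paper: the implication $(2)\Rightarrow(1)$ is exactly Theorem~\ref{thm:C-group}, and for $(1)\Rightarrow(2)$ you observe, just as the paper does, that the conjugation relations are already of Wirtinger form and that $\bare_{\rho(x)}=\bare_x^{-1}$ can be recast in the form $w^{-1}xw=y^{\epsilon}$ (the paper uses $w=\bare_{\rho(x)}$, and your remark that $w=1$ also works is equally valid). No gaps.
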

\begin{proof}
Condition (2) implies (1) by Theorem \ref{thm:C-group}. Conversely, 
for any symmetric quandle $(Q,\rho)$, the presentation of its associated group
\[
\As(Q,\rho)\coloneqq\langle \bare_x\ (x\in Q)\mid \bare_y^{-1}\bare_x \bare_y
=\bare_{x\ast y}, \ \bare_{\rho(x)}=\bare_x^{-1}\ (x,y\in Q)\rangle
\]
can be transformed into a twisted Wirtinger presentation.
Indeed, the relation $\bare_{\rho(x)}=\bare_x^{-1}$ is equivalent to
the relation ${\bare_{\rho(x)}}^{-1}\bare_{\rho(x)}\bare_{\rho(x)}=\bare_x^{-1}$.
\end{proof}

Groups admitting Wirtinger presentations are known by various names:
they are called \emph{Wirtinger groups} in \cite{Yajima},
\emph{labelled oriented graph groups}
(\emph{LOG groups} in short) in \cite{Howie},
and \emph{C-groups} 
in \cites{Kulikov,MR1307063}.
Groups that admit twisted Wirtinger presentations are referred to as \emph{C-groups} 
in Kuz\cprime min \cite{MR1392843}, 
who also provides a characterization of these groups in terms of their second group homology.
Building on his result, the first author and Takase~\cite{MR4849109} 
further studied the second integral homology of groups admitting Wirtinger presentations.

\section{Abelianization of associated groups of symmetric quandles}\label{sec:abelianization}
Let $(Q,\rho)$ be a symmetric quandle.
In this section, we determine the abelianization $\As(Q,\rho)_{\mathrm{Ab}}$
of the associated group $\As(Q,\rho)$, and prove that $\As(Q,\rho)_{\mathrm{Ab}}$ is isomorphic
to the first symmetric quandle homology of $(Q,\rho)$.
Recall that the presentation of the associated group is given by
\[
\As(Q,\rho) = \langle s_x \ (x\in Q) \mid s_y^{-1} s_x s_y = s_{x \ast y},\  
s_{\rho(x)} = s_x^{-1}\ (x,y\in Q) \rangle.
\]
As a consequence, we obtain the presentation of the abelianization $\As(Q,\rho)_{\mathrm{Ab}}$,
namely it is generated by
$[s_x] \ (x\in Q)$ subject to the relations
\begin{equation}\label{eq:presen-abelianization}
[s_x] = [s_{x \ast y}] , \quad [s_{\rho(x)}] = [s_x]^{-1} , \quad [s_x] [s_y] = [s_y] [s_x] 
\quad (x,y\in Q).
\end{equation}
In particular, if $x,y\in Q$ belong to the same $\As(Q)$-orbit, then $[s_x] = [s_y]$.
According to Proposition \ref{lem:embedd-orbit},
exactly one of the following three conditions holds:
\begin{enumerate}
\item $x\in O(x_\lambda)$ for some $\lambda \in \Lambda_1$,  
\item $x\in O(x_\lambda)$ for some $\lambda \in \Lambda_2$,  
\item $x\in O(\rho(x_\lambda))$ for some $\lambda \in \Lambda_2$,
\end{enumerate}
where $\mathcal{C}=\{x_\lambda\mid \lambda\in\Lambda\}$ is a complete set of representatives
of $Q/{\sim}$ and
\begin{align*}
&\Lambda_1\coloneqq\{\lambda\in\Lambda
\mid\rho(x_\lambda)\in O(x_\lambda)\},\\
&\Lambda_2\coloneqq\{\lambda\in\Lambda
\mid\rho(x_\lambda)\not\in O(x_\mu)\ \text{for all $\mu\in\Lambda$}\}.
\end{align*}
As a consequence, $\As(Q,\rho)_{\mathrm{Ab}}$ is generated by
\[
\{ [s_{x_{\lambda}}] \mid \lambda \in \Lambda_1 \} \cup 
\{ [s_{x_{\lambda}}] \mid \lambda \in \Lambda_2 \} \cup 
\{ [s_{\rho(x_{\lambda})}] \mid \lambda \in \Lambda_2 \}.
\]
If $\lambda \in \Lambda_1$ then $[s_{\rho(x_{\lambda})}] = [s_{x_{\lambda}}]$ and 
$[s_{x_{\lambda}}]^2 =[s_{x_{\lambda}}][s_{\rho(x_{\lambda})}] = 1$.
If $\lambda \in \Lambda_2$ then $[s_{\rho(x_{\lambda})}] = [s_{x_{\lambda}}]^{-1}$.
We denote $[s_\lambda]$ instead of $[s_{x_\lambda}]$ for simplicity,
and with this notation, we can deduce the following theorem:
\begin{thm}\label{thm:abelianization}
The abelianization $As(Q,\rho)_{\mathrm{Ab}}$ is generated by
$[s_{\lambda}]$ $(\lambda\in\Lambda)$ subject to the relations
\[
[s_\lambda][s_\mu]=[s_\mu][s_\lambda]\ (\lambda,\mu\in\Lambda),\quad
[s_\lambda]^2=1\ (\lambda\in\Lambda_1).\]
Consequently, we have 
\[
\As(Q,\rho)_{\mathrm{Ab}}=
\bigoplus_{\lambda \in \Lambda_1} \mathbb{Z}/2\mathbb{Z} [s_{{\lambda}}]
\oplus 
\bigoplus_{\lambda \in \Lambda_2} \mathbb{Z} [s_{{\lambda}}]
\cong(\Z/2\Z)^{\oplus\Lambda_1}\oplus \Z^{\oplus\Lambda_2}.
\]
\end{thm}

The symmetric quandle homology $H_*(Q,\rho)$ of a symmetric quandle $(Q,\rho)$
was introduced by Kamada \cite{MR2371714} (see also Kamada--Oshiro \cite{MR2657689}).
We now show that the first homology group 
$H_1(Q,\rho)$ is isomorphic to the abelianization of the associated group $\As(Q,\rho)$.
Although this result is almost certainly known to experts,
we could not find it explicitly stated in the literature.

Let $(Q,\rho)$ be a symmetric quandle.
Define $C_n(Q)$ to be the free abelian group with a basis $Q^n$ for $n=1,2$.
We denote a basis element of $C_1(Q)$ by $(x)\in C_1(Q)$, 
and a basis element of  $C_2(Q)$ by $(x,y)\in C_2(Q)$.
Define the abelian group $C_1(Q,\rho)$ to be the quotient of $C_1(Q)$ by
the subgroup generated by 
\[\{(x)+(\rho(x))\mid x\in Q\}.\]
 Let $\partial\colon C_2(Q)\to C_1(Q,\rho)$ be  the homomorphism defined by
\[
\partial(x,y)\coloneqq (x)-(x\ast y)\quad (x,y\in Q).
\]
Then the first symmetric quandle homology is defined by 
\[H_1(Q,\rho)\coloneqq C_1(Q,\rho)/\Image\,\partial.\]
In other words, $H_1(Q,\rho)$ is the abelian group generated by symbols $[x]$ $(x\in Q)$
subject to the relations
\[
[x]=[x\ast y], \quad [\rho(x)]=-[x]\quad (x,y\in Q),
\]
which coincides with the presentation of $\As(Q,\rho)_\Ab$ given in 
equation \eqref{eq:presen-abelianization}.
Thus we have proved:
\begin{pro}\label{cor:1st-symm-qdl-homology}
Let $(Q,\rho)$ be a symmetric quandle, then $H_1(Q,\rho)\cong\As(Q,\rho)_\Ab$.
\end{pro}

\section{Left adjointness}\label{sec:adjoint}
Let $\Gp$ denote the category of groups and group homomorphisms,
$\Qdl$ the category of quandles and quandle homomorphisms,
and $\SQ$ the category of symmetric quandles and symmetric quandle homomorphisms.
As mentioned in the introduction, 
the associated group functor $\Qdl \to \Gp$,
which assigns the associated group $\As(Q)$ 
to a quandle $Q$,  is left adjoint to
the conjugation quandle functor $\Gp \to \Qdl$,
which assigns the conjugation quandle $\Conj(G)$ 
to a group $G$.

In this section we prove that a similar adjoint relationship holds for the category 
$\SQ$ of symmetric quandles.
Let $\As\colon \SQ \to \Gp$ be the functor that assigns the associated group 
$\As(Q,\rho)$ to a symmetric quandle $(Q,\rho)$.
Let $\Conj\colon \Gp \to \SQ$ be the functor that assigns 
the conjugation symmetric quandle $(\Conj(G),\Inv(G))$ to a group $G$.
 We aim to prove that the functor $\As$ is left adjoint to $\Conj$.
 To this end, we invoke the following proposition:
 \begin{pro}[Kamada--Oshiro \cite{MR2657689}]\label{pro:universality-KO}
Let $(Q,\rho)$ be a symmetric quandle, and let $G$ be a group.
The natural map $\mu\colon Q\to\As(Q,\rho)$ is a morphism of symmetric quandles:
\[
\mu\colon (Q,\rho)\to (\Conj(\As(Q),\rho),\Inv(\As(Q,\rho))).
\]
For any morphism of symmetric quandles $f\colon (Q,\rho)\to (\Conj(G),\Inv(G))$,
there exists a unique group homomorphism $\theta(f)\colon\As(Q,\rho)\to G$ 
with $f=\theta(f)\circ\mu$.
That is, the following diagram commutes:
\[\begin{tikzcd}
(Q,\rho) \arrow[r, "\mu"] \arrow[d, "f"'] & \As(Q,\rho) \arrow[d, "\theta(f)"] \\
(\Conj(G),\Inv(G)) \arrow[r, "\mathrm{id}"]         & G                  
\end{tikzcd}\]
 \end{pro}

\begin{pro}\label{pro:adjoint}
The functor $\As$ is left adjoint to $\Conj$.
\end{pro}
\begin{proof}
To prove that $\As$ is left adjoint to $\Conj$, 
we must show that for any object $(Q,\rho)$ in $\SQ$ and any group $G$ in $\Gp$, 
there exists a natural bijection
\[
\Hom_{\SQ}((Q,\rho), (\Conj(G),\Inv(G)) \cong \Hom_{\Gp}(\As(Q,\rho), G).
\]
Proposition~\ref{pro:universality-KO} provides the map
\[
\theta \colon \operatorname{Hom}_{\SQ}\left( (Q, \rho), \left( \operatorname{Conj}(G), \operatorname{Inv}(G) \right) \right)
\to
\operatorname{Hom}_{\Gp}\left( \operatorname{As}(Q, \rho), G \right)
\]
by assigning to each symmetric quandle homomorphism \(f\) the unique group homomorphism \(\theta(f)\) satisfying \(f = \theta(f) \circ \mu\).
Conversely, we define the map
\[
\eta \colon \operatorname{Hom}_{\Gp}(\operatorname{As}(Q, \rho), G) 
\to \operatorname{Hom}_{\SQ}\left((Q, \rho), \left(\operatorname{Conj}(G), \operatorname{Inv}(G)\right)\right)
\]
by the rule \(\eta(g)(q) = g(s_q)\), for each \(g \in \operatorname{Hom}_{\Gp}(\operatorname{As}(Q, \rho), G)\) and \(q \in Q\),  
where \(s_q\) denotes the canonical generator of \(\operatorname{As}(Q, \rho)\) corresponding to the element \(q \in Q\).

We now verify that the maps $\theta$ and $\eta$ are inverses each other.
Indeed, for any 
$f \in \Hom_{\SQ}((Q,\rho),(\Conj(G),\Inv(G)))$, we have
\[
( \eta \circ \theta (f) ) (q) = \theta (f) (s_q) = f(q),
\]
and for any $g \in \Hom_{\Gp}(\As(Q,\rho),G)$, we have
\[
( \theta \circ \eta (g) ) (s_q) = \eta (g) (q) = g(s_q).
\]
Thus $\eta \circ \theta$ and $\theta \circ \eta$ are identity maps, and two maps $\theta$ and $\eta$ are
mutually inverse.

To prove naturality, we must show that for any morphisms 
\[
\alpha \in \operatorname{Hom}_{\SQ}((P, \sigma), (Q, \rho))\quad \text{and}\quad
\beta \in \operatorname{Hom}_{\Gp}(G, H), 
\]
the following diagram commutes:
\[
\begin{tikzcd}
\operatorname{Hom}_{\SQ}\left((Q, \rho), (\operatorname{Conj}(G), \operatorname{Inv}(G))\right)
\arrow[r, "\theta"] \arrow[d, "(\operatorname{Conj}(\beta) \circ - \circ \alpha)"'] 
& \operatorname{Hom}_{\Gp}(\operatorname{As}(Q, \rho), G) \arrow[d, "\beta \circ - \circ \operatorname{As}(\alpha)"] \\
\operatorname{Hom}_{\SQ}\left((P, \sigma), (\operatorname{Conj}(H), \operatorname{Inv}(H))\right)
\arrow[r, "\theta"'] & \operatorname{Hom}_{\Gp}(\operatorname{As}(P, \sigma), H)
\end{tikzcd}
\]
To verify this, let 
\(f \in \operatorname{Hom}_{\SQ}\left((Q, \rho), (\operatorname{Conj}(G), \operatorname{Inv}(G))\right)\),  
and let \(s_{p} \in \operatorname{As}(P, \sigma)\) denote the canonical generator corresponding to \(p \in P\). Then:
\[
(\beta \circ \theta(f) \circ \operatorname{As}(\alpha))(s_{p}) 
= (\beta \circ \theta(f))(s_{\alpha(p)}) 
= (\beta \circ f \circ \alpha)(p).
\]
On the other hand,
\[
\theta(\operatorname{Conj}(\beta) \circ f \circ \alpha)(s_{p}) 
= (\operatorname{Conj}(\beta) \circ f \circ \alpha)(p) 
= (\beta \circ f \circ \alpha)(p).
\]
Since both expressions agree for all generators \(s_{p}\), the two compositions coincide.  
Therefore, the bijection \(\theta\) is natural in both variables, completing the proof.
\end{proof}

\section{Second quandle homology}\label{sec:2nd-qdl-homology}
In this section, we apply Corollary \ref{cor:commutator} to the second quandle
homology of quandles. 
The proof is similar to the arguments in \cites{MR4447657}.
Let $(Q,\rho)$ be a symmetric quandle such that $Q$ is a \emph{connected} quandle.
We consider the second quandle homology $H_{2}(Q)$ of $Q$.
For the definition of quandle homology, we refer to \cite{MR2371714,MR2657689,nosaka-book}.
Choose $x_{0}\in Q$ and let $\mathrm{Stab}_{\As(Q)}(x_{0})$ be the
isotropy subgroup at $x_{0}$ of the $\As(Q)$-action on $Q$.
In \cite{MR3205568}, Eisermann showed that the second quandle homology group $H_2(Q)$ 
is isomorphic to
\[
H_{2}(Q)\cong (\mathrm{Stab}_{\As(Q)}(x_{0})\cap [\As(Q),\As(Q)])_{\mathrm{Ab}}.
\]
Recall that $Z(Q,\rho)$ is the kernel of the canonical surjection 
$\pi_Q\colon\As(Q)\to\As(Q,\rho)$.
\begin{lem}\label{lem:action}
$Z(Q,\rho)$ acts trivially on $Q$, and hence the $\As(Q)$-action on $Q$
induces the well-defined $\As(Q,\rho)$-action on $Q$
given by $x\cdot s_{y}\coloneqq x\ast y$ $(x,y\in Q)$.
\end{lem}
\begin{proof}
To verify the lemma, it suffices to show that
$x\cdot e_{y}e_{\rho(y)}=x$ for all $x,y\in Q$,
because $\Z(Q,\rho)$ is generated by $\{e_{y}e_{\rho(y)}\mid y\in Q\}$ by Lemma \ref{lem:central}.
We have 
\begin{align*}
x\cdot e_{y}e_{\rho(y)}=(x\ast y)\ast \rho(y)=S_{y}^{-1}(S_{y}(x))=x
\end{align*}
as desired.
\end{proof}
Let $\mathrm{Stab}_{\As(Q,\rho)}(x_{0})$ be the
isotropy subgroup at $x_{0}$ of the $\As(Q,\rho)$-action on $Q$.
In view of Lemma \ref{lem:action}, $\As(Q)$ acts on $Q$
through the canonical surjection 
$\pi_Q\colon\As(Q)\to\As(Q,\rho)$.
Hence the canonical surjection induces a surjective homomorphism
$\mathrm{Stab}_{\As(Q)}(x_{0})\to \mathrm{Stab}_{\As(Q,\rho)}(x_{0})$.
Together with Corollary \ref{cor:commutator}, we conclude that
$\pi_Q\colon\As(Q)\to\As(Q,\rho)$ induces an isomorphism
\[
\mathrm{Stab}_{\As(Q)}(x_{0})\cap [\As(Q),\As(Q)]\cong
\mathrm{Stab}_{\As(Q,\rho)}(x_{0})\cap [\As(Q,\rho),\As(Q,\rho)],
\]
and hence we obtain the following result:
\begin{cor}\label{cor:homology}
Let $(Q,\rho)$ be a symmetric quandle whose underlying quandle $Q$ is connected,
and let $x_{0}\in Q$. We have
\[
H_{2}(Q)\cong (\mathrm{Stab}_{\As(Q,\rho)}(x_{0})\cap [\As(Q,\rho),\As(Q,\rho)])_{\mathrm{Ab}}.
\]
\end{cor}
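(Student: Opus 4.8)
The plan is to transport Eisermann's description of $H_2(Q)$ recalled above along the canonical surjection $\pi\colon\As(Q)\twoheadrightarrow\As(Q,\rho)$. Since $Q$ is connected, Eisermann's theorem \cite{MR3205568} gives $H_2(Q)\cong(\mathrm{Stab}_{\As(Q)}(x_0)\cap[\As(Q),\As(Q)])_{\mathrm{Ab}}$, so it suffices to prove that $\pi$ restricts to an isomorphism
\[
\mathrm{Stab}_{\As(Q)}(x_0)\cap[\As(Q),\As(Q)]\;\xrightarrow{\ \sim\ }\;\mathrm{Stab}_{\As(Q,\rho)}(x_0)\cap[\As(Q,\rho),\As(Q,\rho)],
\]
and then apply the abelianization functor $(-)_{\mathrm{Ab}}$.

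First I would record that, by Lemma \ref{lem:action}, the right $\As(Q)$-action on $Q$ factors through $\pi$; hence $Z(Q,\rho)=\Ker\pi$ fixes $x_0$, so $Z(Q,\rho)\subset\mathrm{Stab}_{\As(Q)}(x_0)$ and $\pi$ induces a surjection $\mathrm{Stab}_{\As(Q)}(x_0)\twoheadrightarrow\mathrm{Stab}_{\As(Q,\rho)}(x_0)$ with kernel $Z(Q,\rho)$. By Corollary \ref{cor:commutator}, $\pi$ also restricts to an isomorphism $[\As(Q),\As(Q)]\xrightarrow{\ \sim\ }[\As(Q,\rho),\As(Q,\rho)]$. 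Intersecting these facts, the restriction of $\pi$ to $\mathrm{Stab}_{\As(Q)}(x_0)\cap[\As(Q),\As(Q)]$ is injective, since its kernel is contained in $Z(Q,\rho)\cap[\As(Q),\As(Q)]$, which is trivial by Theorem \ref{thm:main}.

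For surjectivity, given $\bar g\in\mathrm{Stab}_{\As(Q,\rho)}(x_0)\cap[\As(Q,\rho),\As(Q,\rho)]$, use Corollary \ref{cor:commutator} to write $\bar g=\pi(g)$ for a (unique) $g\in[\As(Q),\As(Q)]$. Since the $\As(Q)$-action on $Q$ factors through $\pi$ and $\bar g$ fixes $x_0$, the element $g$ fixes $x_0$ as well, so $g\in\mathrm{Stab}_{\As(Q)}(x_0)\cap[\As(Q),\As(Q)]$ and maps to $\bar g$. This establishes the displayed isomorphism, and abelianizing both sides yields the corollary.

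I do not expect a genuine obstacle here: the proof is a formal assembly of Eisermann's theorem, Lemma \ref{lem:action}, Corollary \ref{cor:commutator}, and Theorem \ref{thm:main}. The only step demanding a moment's attention is that a commutator-subgroup lift of a stabilizing element of $\As(Q,\rho)$ is again stabilizing, which rests precisely on the fact (Lemma \ref{lem:action}) that the action descends along $\pi$; the connectedness hypothesis on $Q$ enters only through the availability of Eisermann's formula.
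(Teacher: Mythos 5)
Your proposal is correct and follows essentially the same route as the paper: Eisermann's formula, the descent of the action via Lemma \ref{lem:action}, Corollary \ref{cor:commutator} for the commutator subgroups (with injectivity ultimately from $Z(Q,\rho)\cap[\As(Q),\As(Q)]=1$ in Theorem \ref{thm:main}), and then abelianization. You merely spell out the injectivity/surjectivity of the restricted map, which the paper leaves implicit.
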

\begin{eg}\label{eg:2nd-homology-conj}
Let $G$ be a group admitting a twisted Wirtinger presentation $\langle X\mid R\rangle$,
and let $(Q,\rho)$ be the symmetric quandle defined by
\[
Q\coloneqq \bigcup_{g\in G}g^{-1}X^\pm g,\quad\rho\colon g\mapsto g^{-1}
\]
as in the proof of Theorem \ref{thm:C-group}.
In that proof, it was shown that the homomorphism $p_Q\colon\As(Q,\rho)\to G$,
defined by $s_g\mapsto g$, is an isomorphism.
Identifying $\As(Q,\rho)$ with $G$ via $p_Q$, we obtain
\[
\mathrm{Stab}_{\As(Q,\rho)}(x_{0})=C_G(x_0),
\]
where $C_G(x_0)$ denotes the centralizer of $x_0$ in $G$. 
It follows from Corollary \ref{cor:homology} that if 
$Q$ is a \emph{connected} quandle, then
\begin{equation}\label{eq:H2Q}
H_{2}(Q)\cong (C_G(x_0)\cap [G,G])_\Ab.
\end{equation}
\end{eg}

\section{Involutive quandles}\label{sec:involutive}
Our results apply particularly well to involutive quandles.  
Recall that a quandle \( Q \) is involutive if \((x \ast y) \ast y = x\) for all \( x, y \in Q \).  
For an involutive quandle \( Q \), the identity map \( \mathrm{id}_Q \) is a good involution,  
and hence \((Q, \mathrm{id}_Q)\) forms a symmetric quandle.
For such a symmetric quandle \((Q, \mathrm{id}_Q)\), the associated group is given by
\[\As(Q,\mathrm{id}_Q)=\langle \bare_x\ (x\in Q)\mid 
\bare_y^{-1}\bare_x \bare_y
=\bare_{x\ast y}, \ \bare_x^{2}=1\ (x,y\in Q)\rangle.
\] 

Observe that
$x\sim y$ if and only if $x$ and $y$ belong to the same $\As(Q)$-orbit.
Let $\mathcal{O}$ be a complete set of representatives of orbits.
As an immediate consequence of Theorem \ref{thm:main} and Theorem \ref{thm:abelianization}, 
we obtain the following corollary:
\begin{cor}\label{cor:involutive}
Let $Q$ be an involutive quandle, let $\pi_Q\colon\As(Q)\to\As(Q,\mathrm{id}_Q)$
be the canonical surjection, and let $Z {(Q,\mathrm{id}_Q)}$ be its kernel.
Then $Z {(Q,\mathrm{id}_Q)}$ is a free abelian group with basis
$\{e_{x}^{2}\mid x\in\mathcal{O}\}$,  and there is a central extension of the form
\[
0\to\Z^{\oplus\mathcal{O}}\to \As(Q)\xrightarrow{\pi_Q}\As(Q,\mathrm{id}_Q)\to 1.
\]
For the abelianization $\As(Q,\mathrm{id}_Q)_{\Ab}$, we have
\[
\As(Q,\mathrm{id}_Q)_{\Ab}=\bigoplus_{x\in\mathcal{O}}\Z/2\Z[s_{x}]\cong (\Z/2\Z)^{\oplus\mathcal{O}}.
\]
\end{cor}
It is also worth noting that, when $Q$ is a finite involutive quandle, $\As(Q,\mathrm{id}_Q)$ coincides
with a ``finite quotient of the structure group'' studied in Lebed--Vendramin \cite{MR3974961},
where it is denoted by $\bar{G}_{(X,\triangleleft)}$.
The following theorem is a consequence of their results:
\begin{thm}[Lebed--Vendramin \cite{MR3974961}]
Let $Q$ be a finite involutive quandle. Then $\As(Q,\mathrm{id}_Q)$ is a finite group.
\end{thm}
\begin{rem}
Let $(Q,\rho)$ be a symmetric quandle. 
Even if $Q$ is finite, the associated group $\As(Q,\rho)$ may be infinite.
A simple example was given by Kamada--Oshiro~\cite{MR2657689}.
Specifically,
let $Q=\{x,y\}$ be a trivial quandle, and let $\rho\colon Q\to Q$
be the transposition that swaps $x$ and $y$.
Then $(Q,\rho)$ is a symmetric quandle, and $\As(Q,\rho)\cong\Z$.
\end{rem}

\begin{eg}
Let us consider Example \ref{eg:2nd-homology-conj} in the case of involutive quandles.
Let $\langle X\mid R\rangle$ be a Wirtinger presentation.
By adding relations $x^2=1$ for all $x\in X$, we obtain a
twisted Wirtinger presentation 
\[
G\coloneqq\langle X\mid R\sqcup\{x^2\mid x\in X\}\rangle,
\]
and the corresponding involutive quandle
\[
Q\coloneqq \bigcup_{g\in G}g^{-1}Xg.
\]
The quandle $Q$ is connected if and only if all generators $x\in X$ are
mutually conjugate in $G$, and in that case, for any $x_0\in X$, we have
the isomorphism as in \eqref{eq:H2Q}:
\[
H_2(Q)\cong (C_G(x_0)\cap [G,G])_\Ab
\]

A particular example is the Coxeter quandle $Q_W$ associated with a Coxeter system $(W,S)$.
For background on Coxeter groups and systems, see Humphreys~\cite{humphreys}.
Let $(W,S)$ be a Coxeter system, and define
\[Q_W\coloneqq\bigcup_{w\in W}w^{-1}Sw,\]
the set of reflections of $W$. 
Then $Q_W$ forms an involutive quandle with respect to the quandle operation given by
\[x\ast y=y^{-1}xy=yxy\quad (x,y\in Q_W).\]
This quandle $Q_W$ is called the \emph{Coxeter quandle} associated with $(W,S)$
in \cites{MR4175808,hasegawa-thesis,MR3821082,nosaka-book,arXiv:2506.23175,MR4669143}.
Note that some people use the term ``Coxeter quandle'' to refer to a different kind of quandle.
If all elements in $S$ are mutually conjugate in $W$, for any $s_0\in S$, we have an isomorphism
\begin{equation}\label{eq:homology-Coxeter}
H_2(Q_W)\cong (C_W(s_0)\cap [W,W])_\Ab.
\end{equation}
\end{eg}

\begin{rem}
The associated group of a Coxeter quandle was studied in detail by
the first author \cite{MR4175808}.
Hasegawa \cite{hasegawa-thesis} investigated the associated groups 
$\As(Q)$ and $\As(Q, \mathrm{id}_Q)$ of involutive quandles $Q$, 
without being aware of the notion of symmetric quandles.
 In particular, he proved Theorem \ref{thm:main}, Theorem \ref{thm:pullback}, and 
 Corollary \ref{cor:commutator} in the setting of  involutive quandles.
Using the isomorphism \eqref{eq:homology-Coxeter}, he also computed the second quandle homology
of all finite connected Coxeter quandles in \cite{hasegawa-thesis}.
\end{rem}

\section{Embeddability}\label{sec:embedd}
A quandle $Q$ is called \emph{embeddable} 
in $\As(Q)$ (also called \emph{injective} in the literature) if the map
$Q\to\As(Q)$ defined by $x\mapsto e_x$ $(x\in Q)$ is injective.
If $Q$ is embeddable, then $Q$ is isomorphic to
a subquandle of the conjugation quandle $\Conj(\As(Q))$.
The embeddability of finite quandles appears to be relevant 
in the study of set-theoretic solutions to the Yang-Baxter equation 
(see \cites{arXiv:2506.23175,MR3974961}
for instance).
One of the most well-known results concerning embeddability is a theorem of
Ryder \cite{MR1388194}, who showed that
the fundamental quandle of a classical knot in $S^3$ is embeddable
if and only if the knot is prime.
For a compiled list of known embeddable quandles, we refer the reader to the introduction of \cite{MR4564617}.

Similarly, we say that a symmetric quandle $(Q, \rho)$ is \emph{embeddable} in $\As(Q,\rho)$ if  
the map $Q \to \As(Q, \rho)$ defined by $x \mapsto s_x$ $(x \in Q)$ is injective.
Observe that if a symmetric quandle $(Q, \rho)$ is embeddable,  
then the underlying quandle $Q$ is also embeddable.  
Indeed, the map $Q \to \As(Q, \rho)$ factors through $\As(Q)$ as the composition  
\[
Q \to \As(Q) \twoheadrightarrow \As(Q, \rho).
\]
We show that the converse holds.
To this end, we use the following proposition:
\begin{pro}\label{lem:embedd-orbit}
Let $\Lambda_1,\Lambda_2$, and $\Lambda$ be as in Lemma \ref{lem:orbits}.
For any $x\in Q$, exactly one of the following three conditions holds:
\begin{enumerate}
\item $x\in O(x_\lambda)$ for some $\lambda \in \Lambda_1$,  
\item $x\in O(x_\lambda)$ for some $\lambda \in \Lambda_2$,  
\item $x\in O(\rho(x_\lambda))$ for some $\lambda \in \Lambda_2$,
\end{enumerate}
where $O(y)$ denotes the $\As(Q)$-orbit of $y\in Q$.
Consequently, the subset
\[
\{x_{\lambda}\mid\lambda\in\Lambda\}\cup 
\{\rho(x_{\lambda})\mid\lambda\in\Lambda_{2}\}\subset Q
\]
forms a complete set of representatives of $\As(Q)$-orbits of $Q$.
\end{pro}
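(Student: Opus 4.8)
The plan is to establish the trichotomy first and then deduce the ``complete set of representatives'' claim as a formal consequence of it together with the results already proved.

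For the trichotomy, I would argue as follows. Fix $x \in Q$. Since $\calC = \{x_\lambda \mid \lambda \in \Lambda\}$ is a complete set of representatives for $Q/\!\sim$, the element $x$ is equivalent (under $\sim$) to exactly one $x_\lambda$. Because the equivalence relation $\sim$ is generated by the orbit relation and by $x \sim \rho(x)$, walking along such a chain from $x$ to $x_\lambda$ shows that $x$ lies in an $\As(Q)$-orbit that can be reached from $O(x_\lambda)$ by finitely many applications of $\rho$; since $\rho$ is an involution and, by the axiom $\rho(a \qdl b) = \rho(a) \qdl b$, it carries $\As(Q)$-orbits to $\As(Q)$-orbits, the only orbits reachable from $O(x_\lambda)$ this way are $O(x_\lambda)$ and $O(\rho(x_\lambda))$. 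Thus $x \in O(x_\lambda) \cup O(\rho(x_\lambda))$. If $\lambda \in \Lambda_1$, then $\rho(x_\lambda) \in O(x_\lambda)$, so $O(\rho(x_\lambda)) = O(x_\lambda)$ and case (1) holds. If $\lambda \in \Lambda_2$, then $O(x_\lambda)$ and $O(\rho(x_\lambda))$ are distinct (by the previous lemma, $\rho(x_\lambda) \notin O(x_\mu)$ for every $\mu$, in particular for $\mu = \lambda$), so exactly one of (2), (3) holds. Exclusivity across the three cases: if $\lambda \in \Lambda_1$, neither (2) nor (3) can also occur since that would force $x$ into an orbit $O(x_\mu)$ or $O(\rho(x_\mu))$ with $\mu \in \Lambda_2$, whence $x_\lambda \sim x \sim x_\mu$, contradicting that $\calC$ is a set of representatives (and for (3), also $x_\lambda \sim \rho(x_\mu) \sim x_\mu$); the analogous argument rules out overlaps between distinct indices in cases (2) and (3), using Lemma \ref{lem:orbits} to separate $O(\rho(x_\lambda))$ from $O(\rho(x_\mu))$ for $\lambda \neq \mu$.

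For the final sentence, I would combine the trichotomy with Lemma \ref{lem:orbits}. The trichotomy says every $x \in Q$ lies in the $\As(Q)$-orbit of some element of the set $S := \{x_\lambda \mid \lambda \in \Lambda\} \cup \{\rho(x_\lambda) \mid \lambda \in \Lambda_2\}$, so $S$ meets every orbit. Lemma \ref{lem:orbits} says precisely that the elements of $S$ lie in mutually distinct $\As(Q)$-orbits, so $S$ meets every orbit at most once. Hence $S$ is a complete, irredundant set of orbit representatives.

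The main obstacle is making airtight the step that the only $\As(Q)$-orbits reachable from $O(x_\lambda)$ by chains of the relation $\sim$ are $O(x_\lambda)$ and $O(\rho(x_\lambda))$ — i.e. that once one passes from an orbit $O$ to $\rho(O)$, a further move either stays in $\rho(O)$ (if the next step of the chain is within an orbit) or returns to $O$ (if it is a $\rho$-step), using $\rho^2 = \mathrm{id}$ and the fact that $\rho$ permutes orbits. This is really a short induction on the length of the connecting chain, but it is the conceptual heart of the proof; the remaining bookkeeping about exclusivity is then routine given Lemma \ref{lem:orbits} and the preceding lemma.
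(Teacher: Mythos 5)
Your proof is correct and takes essentially the same route as the paper: the key step in both is that $\rho$ is compatible with the $\As(Q)$-action (the paper verifies full equivariance $\rho(x\cdot g)=\rho(x)\cdot g$ on generators and their inverses, using the second good-involution axiom, whereas you observe that $\rho$ permutes orbits, which indeed already follows from $\rho(a\qdl b)=\rho(a)\qdl b$ together with bijectivity of each $S_b$), followed by an induction along the chain defining $\sim$ to conclude that the $\sim$-class of $x_\lambda$ is contained in $O(x_\lambda)\cup O(\rho(x_\lambda))$. The exclusivity and the complete-set-of-representatives conclusion are then drawn from Lemma \ref{lem:orbits}, exactly as in the paper.
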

\begin{proof}
Recall that the right $\As(Q)$-action on $Q$ is defined by $x\cdot e_{y}=x\ast y$
$(x,y\in Q)$.
We have $\rho(x\ast y)=\rho(x)\ast y$
and hence 
\begin{equation}\label{eq:equivariant1}
\rho(x\cdot e_y)=\rho(x)\cdot e_y\quad (x,y\in Q).
\end{equation}
We claim that 
\begin{equation}\label{eq:equivariant2}
\rho(x\cdot e_y^{-1})=\rho(x)\cdot e_y^{-1}\quad (x,y\in Q).
\end{equation}
This follows from the computation:
\begin{align*}
&\rho(x\cdot e_y^{-1})=\rho(S_{y}^{-1}(x))=\rho(x\ast\rho(y))=\rho(x)\ast\rho(y),\\
&\rho(x)\cdot e_y^{-1}=S_{y}^{-1}(\rho(x))=\rho(x)\ast\rho(y).
\end{align*}
As a consequence of \eqref{eq:equivariant1} and \eqref{eq:equivariant2}, we obtain
\begin{equation}\label{eq:equivariant}
\rho(x\cdot g)=\rho(x)\cdot g\quad (x\in Q,g\in\As(Q)),
\end{equation}
that is, the involution $\rho$ is $\As(Q)$-equivariant.
Given $x\in Q$, there exists a unique $\lambda\in\Lambda=\Lambda_1\sqcup\Lambda_2$ 
such that $x\sim x_\lambda$.
By the definition of the equivalence relation $\sim$, 
the element $x_\lambda$ can be transformed into $x$ through 
repeated applications of the following two types of operations and their inverses:
\begin{enumerate}
\item $y\mapsto y\ast z=y\cdot e_z\quad (y,z\in Q)$,
\item $y\mapsto \rho(y)\quad (y\in Q)$.
\end{enumerate}
In view of  \eqref{eq:equivariant}, we conclude that $x=x_\lambda\cdot g$
 or $x=\rho(x_\lambda)\cdot g$ for some $g\in\As(Q)$, and hence
$x\in O(x_\lambda)$ or $x\in O(\rho(x_\lambda))$, from which the proposition easily follows.
\end{proof}
\begin{thm}\label{thm:emeddable}
Let $(Q, \rho)$ be a symmetric quandle.
Then $(Q, \rho)$ is embeddable in $\As(Q,\rho)$ 
if and only if the underlying quandle $Q$ is embeddable in $\As(Q)$.
\end{thm}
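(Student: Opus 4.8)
The plan is to reduce the embeddability of $(Q,\rho)$ to that of $Q$ by using the central extension $1\to Z(Q,\rho)\to\As(Q)\to\As(Q,\rho)\to 1$ together with the explicit basis of $Z(Q,\rho)$ from Theorem \ref{thm:main} and the orbit description in Proposition \ref{lem:embedd-orbit}. One direction is already observed in the text: if $(Q,\rho)$ is embeddable then so is $Q$, since $Q\to\As(Q,\rho)$ factors through $Q\to\As(Q)$. So the substance is the converse: assuming $x\mapsto e_x$ is injective on $Q$, I must show $x\mapsto s_x$ is injective on $Q$.

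First I would reformulate the problem. Since $s_x$ is the image of $e_x$ under the canonical surjection $\pi\colon\As(Q)\twoheadrightarrow\As(Q,\rho)$ with kernel $Z(Q,\rho)$, we have $s_x=s_y$ if and only if $e_xe_y^{-1}\in Z(Q,\rho)$. Thus, assuming $Q$ is embeddable (so that $x\mapsto e_x$ is injective), it suffices to prove: if $e_xe_y^{-1}\in Z(Q,\rho)$ then $x=y$. The key tool is that the composite $Z(Q,\rho)\hookrightarrow\As(Q)\twoheadrightarrow\As(Q)_{\mathrm{Ab}}$ is injective (this is exactly $Z(Q,\rho)\cap[\As(Q),\As(Q)]=1$ from Theorem \ref{thm:main}), and that $\As(Q)_{\mathrm{Ab}}$ is free abelian on the classes $[e_z]$ indexed by $\As(Q)$-orbit representatives, which by Proposition \ref{lem:embedd-orbit} can be taken to be $\{x_\lambda\mid\lambda\in\Lambda\}\cup\{\rho(x_\lambda)\mid\lambda\in\Lambda_2\}$, with $[e_xe_{\rho(x)}]=[e_{x_\lambda}e_{\rho(x_\lambda)}]$ for $x\sim x_\lambda$.

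The main step is then a bookkeeping argument in $\As(Q)_{\mathrm{Ab}}$. Suppose $e_xe_y^{-1}\in Z(Q,\rho)$. Writing $x\in O(x_\lambda)$ or $O(\rho(x_\lambda))$ and $y\in O(x_\mu)$ or $O(\rho(x_\mu))$ according to the trichotomy of Proposition \ref{lem:embedd-orbit}, the image $[e_x][e_y]^{-1}$ in $\As(Q)_{\mathrm{Ab}}$ must equal the image of some element $\prod_\lambda(e_{x_\lambda}e_{\rho(x_\lambda)})^{c_\lambda}$ of $Z(Q,\rho)$; the latter image, as computed in the proof of Theorem \ref{thm:main}, is $\prod_{\Lambda_1}[e_{x_\lambda}]^{2c_\lambda}\prod_{\Lambda_2}[e_{x_\lambda}]^{c_\lambda}\prod_{\Lambda_2}[e_{\rho(x_\lambda)}]^{c_\lambda}$. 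Comparing coefficients against the free basis forces the orbits of $x$ and $y$ to coincide, and moreover pins down which coset they lie in; in fact one reads off that $x$ and $y$ lie in the same $\As(Q)$-orbit and that $e_xe_y^{-1}$ itself lies in $[\As(Q),\As(Q)]$ — but $e_xe_y^{-1}\in Z(Q,\rho)$ and $Z(Q,\rho)\cap[\As(Q),\As(Q)]=1$, so $e_x=e_y$, whence $x=y$ by embeddability of $Q$. I expect the delicate point — the main obstacle — to be handling the elements $x$ lying in orbits $O(\rho(x_\lambda))$ for $\lambda\in\Lambda_2$: one must use the $\As(Q)$-equivariance of $\rho$ from \eqref{eq:equivariant} to relate $e_x$ with an $\As(Q)$-conjugate of $e_{\rho(x_\lambda)}$, and then carefully track the two cases $\lambda\in\Lambda_1$ versus $\lambda\in\Lambda_2$ separately, since in the former case $[e_{x_\lambda}]=[e_{\rho(x_\lambda)}]$ collapses two basis elements into one and the coefficient $2c_\lambda$ appears. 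Once the linear-independence bookkeeping is set up correctly, the conclusion is immediate.
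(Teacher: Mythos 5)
Your proposal is correct and follows essentially the same route as the paper's proof: reduce $s_x=s_y$ to $e_xe_y^{-1}\in Z(Q,\rho)$, expand in the basis $\{e_{x_\lambda}e_{\rho(x_\lambda)}\}$ of Theorem \ref{thm:main}, and compare coefficients in the free abelian group $\As(Q)_{\mathrm{Ab}}$ against the orbit basis of Proposition \ref{lem:embedd-orbit}, splitting into the $\Lambda_1$ (coefficient $2c_\lambda$) and $\Lambda_2$ (paired coefficients $c_\lambda$) cases to force $[e_x]=[e_y]$ and then invoking $Z(Q,\rho)\cap[\As(Q),\As(Q)]=1$. The paper phrases this contrapositively and concludes $c_\lambda=0$ for all $\lambda$ rather than citing the trivial intersection directly, but these are the same argument.
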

\begin{proof}
Suppose that $(Q, \rho)$ is not embeddable in $\As(Q,\rho)$.
Then there exist distinct elements $x, y \in Q$ such that  
$s_x = s_y \in \As(Q, \rho)$.  
This implies that $e_x e_y^{-1} \in Z(Q, \rho)$,  
and hence $e_x e_y^{-1}$ can be uniquely expressed as  
\begin{equation}\label{eq:decomp}
e_x e_y^{-1} = 
\prod_{\lambda \in \Lambda} (e_{x_{\lambda}} e_{\rho(x_{\lambda})})^{c_{\lambda}}, 
\end{equation}
where $c_{\lambda} \in \mathbb{Z}$ and $c_{\lambda} = 0$ for all but finitely many $\lambda \in \Lambda$.  
In the abelianization $\As(Q)_{\mathrm{Ab}}$, we obtain  
\begin{align}\label{eq:embedd}
\begin{split}
[e_x][e_y]^{-1}& = 
\prod_{\lambda \in \Lambda} [e_{x_{\lambda}} e_{\rho(x_{\lambda})}]^{c_{\lambda}}  \\
&=
\prod_{\lambda \in \Lambda_1} [e_{x_{\lambda}}]^{2c_{\lambda}} 
\prod_{\mu \in \Lambda_2} [e_{x_{\mu}}]^{c_{\mu}} 
\prod_{\mu \in \Lambda_2} [e_{\rho(x_{\mu})}]^{c_{\mu}}.
\end{split}
\end{align}
Here we should recall that $\As(Q)_{\mathrm{Ab}}$ is a free abelian group and, by Lemma \ref{lem:orbits}, 
the following subset is linearly independent:
\[
\{[e_{x_{\lambda}}]\mid \lambda\in\Lambda=\Lambda_{1}\sqcup\Lambda_{2}\}\cup
 \{[e_{\rho(x_{\mu})}]\mid \mu\in\Lambda_2\}\subset\As(Q)_{\mathrm{Ab}}.
 \]
It follows from Proposition~\ref{pro:asq-abelianization} and Proposition~\ref{lem:embedd-orbit} 
that exactly one of the following three conditions holds in $\As(Q)_{\mathrm{Ab}}$:
\begin{enumerate}
\item $[e_x] = [e_{x_{\lambda}}]$ for some $\lambda \in \Lambda_1$,  
\item $[e_x] = [e_{x_{\mu}}]$ for some $\mu \in \Lambda_2$,  
\item $[e_x] = [e_{\rho(x_{\mu})}]$ for some $\mu \in \Lambda_2$.
\end{enumerate}
Suppose that $[e_x] \neq [e_y]$. 
We claim that none of the cases (1), (2), or (3) can occur.  
By comparing both sides of the equation~\eqref{eq:embedd}, 
we see that:
If (1) holds, then $2c_{\lambda} = 1$, which contradicts $c_{\lambda} \in \mathbb{Z}$.
If (2) holds, 
then we would simultaneously obtain both
 $c_{\mu} = 1$ and $c_{\mu} = 0$,
or both  $c_{\mu} = 1$ and $c_{\mu} = -1$,
depending on whether $[e_y]\not=[e_{\rho(x_{\mu})}]$
or $[e_y]=[e_{\rho(x_{\mu})}]$, 
respectively.  
Similarly, if (3) holds, 
then we would simultaneously obtain both
 $c_{\mu} = 1$ and $c_{\mu} = 0$,
or both  $c_{\mu} = 1$ and $c_{\mu} = -1$,
depending on whether $[e_y]\not=[e_{x_{\mu}}]$
or $[e_y]=[e_{x_{\mu}}]$, 
respectively.  
In either case, we again reach a contradiction.

Therefore, we conclude that $[e_x] = [e_y]$ and hence $[e_x][e_y]^{-1}=1$, 
which implies that $c_{\lambda} = 0$ for all $\lambda \in \Lambda$ in \eqref{eq:embedd}.  
In view of the equation~\eqref{eq:decomp}, it follows that $e_x e_y^{-1} = 1$, and hence $e_x = e_y$.  
Thus we conclude that the underlying quandle $Q$ is not embeddable.
\end{proof}

\begin{ack}
The first author was partially supported by JSPS
KAKENHI Grant Number 20K03600 and 24K06727.
\end{ack}

\begin{bibdiv}
\begin{biblist}

\bibselect{reference}

\end{biblist}
\end{bibdiv}
\end{document}